\newcommand{\Ba}{{\rm Baire}}
\newcommand{\impli}{\Rightarrow}
\newcommand{\Nat}{\mathbb{N}}
\newcommand{\cD}{\mathcal{D}}
\def\epsilon{\varepsilon}
\newcommand{\sub}{\subseteq}
\newtheorem{theo}{Theorem}[section]
\newtheorem{lem}[theo]{Lemma}
\newtheorem{pro}[theo]{Proposition}
\newtheorem{cor}[theo]{Corollary}
\newtheorem{fact}[theo]{Fact}
\newtheorem{question}[theo]{Question}
\theoremstyle{definition}
\newtheorem{rem}[theo]{Remark}
\newtheorem{defi}[theo]{Definition}
\numberwithin{equation}{section}
\title{On integration in Banach spaces and total sets}
\author{Jos\'e Rodr\'{i}guez}
\address{Dpto. de Ingenier\'{i}a y Tecnolog\'{i}a de Computadores,
Facultad de Inform\'{a}tica, Universidad de Murcia, 30100 Espinardo (Murcia), Spain}
\email{joserr@um.es}
\subjclass[2010]{46B22, 46G10}
\keywords{Pettis integral; $\Gamma$-integral; Radon-Nikod\'{y}m property; weak Radon-Nikod\'{y}m property;
weakly Lindelöf determined Banach space; property~($\mathcal{D}'$); semi-embedding}
\thanks{Research supported by projects MTM2014-54182-P, MTM2017-86182-P (AEI/FEDER, UE)
and 19275/PI/14 (Fundaci\'on S\'eneca).}
\dedicatory{Dedicated to the memory of Joe Diestel}
\begin{document}

\begin{abstract}
Let $X$ be a Banach space and $\Gamma \sub X^*$
a total linear subspace. We study the concept of $\Gamma$-integrability for $X$-valued functions~$f$ defined on a complete probability space, 
i.e. an analogue of Pettis integrability by dealing only with the compositions $\langle x^*,f \rangle$ for~$x^*\in \Gamma$.
We show that $\Gamma$-integrability and Pettis integrability are equivalent whenever $X$ has Plichko's property~($\mathcal{D}'$) (meaning that
every $w^*$-sequentially closed subspace of~$X^*$ is $w^*$-closed).
This property is enjoyed by many Banach spaces including all spaces with $w^*$-angelic dual as well as all spaces which 
are $w^*$-sequentially dense in their bidual. 
A particular case of special interest arises when considering $\Gamma=T^*(Y^*)$
for some injective operator $T:X \to Y$. Within this framework, we show that if $T:X \to Y$ is a semi-embedding, $X$ has property~($\mathcal{D}'$)
and $Y$ has the Radon-Nikod\'{y}m property, then $X$ has the weak Radon-Nikod\'{y}m property. This extends
earlier results by Delbaen (for separable $X$) and Diestel and Uhl (for weakly $\mathcal{K}$-analytic~$X$).
\end{abstract}

\maketitle

\section{Introduction}

A result attributed to Delbaen, which first appeared in a paper by Bourgain and Rosenthal (see \cite[Theorem~1]{bou-ros-2}),
states that if $T:X\to Y$ is a semi-embedding between Banach spaces (i.e. an injective operator such that $T(B_X)$ is closed), 
$X$~is separable and $Y$ has the Radon-Nikod\'{y}m property (RNP), 
then $X$ has the RNP as well (cf. \cite[Theorem~4.1.13]{bou-J}). That result was also known 
to be true if $X$ is weakly $\mathcal{K}$-analytic, see \cite[footnote on p.~160]{die-uhl-2}. 
No proof nor authorship info of that generalization was given in \cite{die-uhl-2}. 
In our last email exchange I asked Prof. Joe Diestel about that and he told me:
\begin{quote} 
{\em ``The result was realized as so, around the time that semi-embeddings were being properly appreciated, 
as almost immediate consequences of the more general notions of $\mathcal{K}$-analyticity.  
Jerry and I understood what we understood thru Talagrand's papers. So if attribution is the issue, it's Talagrand's fault!''}.
\end{quote}
Loosely speaking, a key point to get such kind of results is to deduce the integrability of an $X$-valued function~$f$
from the integrability of the $Y$-valued composition $T\circ f$, which quite often reflects on the family
of real-valued functions 
$$
	\{\langle y^*, T \circ f \rangle: \, y^*\in Y^*\}=
	\{\langle x^*, f \rangle: \, x^*\in T^*(Y^*)\},
$$
i.e. the compositions of~$f$ with elements of the total linear subspace $T^*(Y^*) \sub X^*$.

In this paper we study Pettis-type integration of Banach space valued functions
with respect to a total linear subspace of the dual.
Throughout $(\Omega,\Sigma,\nu)$ is a complete probability space and $X$ is a Banach space. Let $\Gamma \sub X^*$
be a total linear subspace and let $f:\Omega\to X$ be a function. Following~\cite{mus6}, $f$ is said to be: (i)~{\em $\Gamma$-scalarly integrable} if $\langle x^*, f \rangle$ is 
integrable for all $x^*\in \Gamma$; (ii) {\em $\Gamma$-integrable} if it is $\Gamma$-scalarly integrable and for every $A\in \Sigma$ there is 
an element $\int_A f \, d\nu \in X$ such that $x^*(\int_A f \, d\nu)=\int_A \langle x^*, f \rangle \, d\nu$ for all $x^*\in \Gamma$.
This generalizes the classical Pettis and Gelfand integrals, which are obtained respectively when $\Gamma=X^*$ or $X$ is a 
dual space and $\Gamma$ is the predual of~$X$. When does $\Gamma$-integrability imply Pettis integrability? 
Several authors addressed this question in the particular case of Gelfand integrability.
For instance, Diestel and Faires (see \cite[Corollary~1.3]{die-fai})
proved that Gelfand and Pettis integrability coincide for any strongly measurable function
$f:\Omega \to X^*$ whenever $X^*$ contains no subspace isomorphic to~$\ell_\infty$,
while Musia{\l} (see \cite[Theorem~4]{mus6}) showed that one can give up strong measurability if 
the assumption on~$X$ is strengthened to being $w^*$-sequentially dense in~$X^{**}$.
More recently, $\Gamma$-integrability has been studied in \cite{and-zie,kun}
in connection with semigroups of operators. 

This paper is organized as follows. 

Section~\ref{section:measurability} contains some preliminaries on scalar measurability of Banach space valued functions and total sets. 
Special attention is paid to Banach spaces having Gulisashvili's {\em property~($\cD$)} \cite{gul-J}, i.e. those
for which scalar measurability can be tested by using any total subset of the dual.

In Section~\ref{section:integration} we analyze $\Gamma$-integrability and
discuss its coincidence with Pettis integrability. We show (see Theorem~\ref{theo:Dprime}) that this is the case whenever $X$ 
has Plichko's {\em property ($\mathcal{D}'$)}~\cite{pli3}, which means that every $w^*$-sequentially closed
subspace of~$X^*$ is $w^*$-closed. By the Banach-Dieudonn\'{e} theorem,
property~($\mathcal{D}'$) is formally weaker than {\em property~($\mathcal{E}'$)} of~\cite{gon3},
which means that every $w^*$-sequentially closed convex bounded subset of~$X^*$ is $w^*$-closed.
The class of Banach spaces having property ($\mathcal{E}'$) includes those with $w^*$-angelic dual
as well as all spaces which are $w^*$-sequentially dense in their bidual
(see \cite[Theorem~5.3]{avi-mar-rod}). On the other hand, we also study conditions ensuring the $\Gamma$-integrability
of a $\Gamma$-scalarly integrable function. This is connected in a natural way to the Mazur property of $(\Gamma,w^*)$ and the completeness of 
the Mackey topology $\mu(X,\Gamma)$ associated to the dual pair $\langle X,\Gamma \rangle$, which have been topics of recent
research in \cite{bon-cas,gui-mon,gui-mon-ziz}. If $X$ has property~($\mathcal{E}'$) and $\Gamma$ is norming, 
we prove that a function $f:\Omega \to X$ is Pettis integrable whenever the family 
$\{\langle x^*, f \rangle: \, x^*\in \Gamma \cap B_{X^*}\}$ is uniformly integrable (Theorem~\ref{theo:ScalarGamma}).

Finally, in Section~\ref{section:operators} we focus on the interplay between the integrability
of a function $f:\Omega \to X$ and that of the composition $T\circ f:\Omega \to Y$, where $T$
is an injective operator from~$X$ to the Banach space~$Y$. We extend
the aforementioned results of Delbaen, Diestel and Uhl by proving that if $T$ is a semi-embedding, $Y$
has the RNP and $X$ has property~($\mathcal{D}'$), then $X$ has the weak Radon-Nikod\'{y}m property (WRNP),
see Theorem~\ref{theo:sequential}. In this statement, the RNP of~$X$ is guaranteed if the assumption on~$X$ 
is strengthened to being weakly Lindelöf determined (Corollary~\ref{cor:WLD}).

\subsection*{Notation and terminology}

We refer to \cite{bou-J,die-uhl-J} (resp. \cite{Musial,van}) for detailed information on vector measures and the RNP (resp.
Pettis integrability and the WRNP). 
All our linear spaces are real. An {\em operator} between Banach spaces is a continuous linear map. 
By a {\em subspace} of a Banach space we mean a closed linear subspace.
No closedness is assumed when we just talk about ``linear subspaces''.
We write $B_X=\{x\in X:\|x\|\leq 1\}$ (the closed unit ball of~$X$). The topological dual of~$X$
is denoted by~$X^*$. The evaluation of $x^*\in X^*$ at $x\in X$ is denoted by either $x^*(x)$
or $\langle x^*,x\rangle$. A set $\Gamma \sub X^*$ is {\em total} (over~$X$) if it separates the points of~$X$, i.e. for every $x\in X\setminus \{0\}$ 
there is $x^*\in \Gamma$ such that $x^*(x)\neq 0$. A linear subspace $\Gamma \sub X^*$ is said to be {\em norming}
if the formula 
$$
	|||x|||=\sup\{x^*(x):\, x^*\in \Gamma \cap B_{X^*}\}, \quad x\in X,
$$
defines an equivalent norm on~$X$. The weak topology on~$X$ and the weak$^*$ topology on~$X^*$ are denoted by~$w$ and~$w^*$, respectively. 
Given another Banach space~$Z$, we write $X\not\supseteq Z$ if $X$ contains no subspace isomorphic to~$Z$. 
The convex hull and the linear span of a set $D \sub X$ are denoted by ${\rm co}(D)$
and ${\rm span}(D)$, respectively. The Banach space~$X$ is said to be {\em weakly Lindel\"{o}f determined} (WLD)
if $(B_{X^*},w^*)$ is a {\em Corson} compact, i.e. it is homeomorphic to a set~$K \subset [-1,1]^I$ (for some non-empty set~$I$),
equipped with the product topology, in such a way that $\{i\in I:k(i)\neq 0\}$ is countable for every $k\in K$.
Every weakly $\mathcal{K}$-analytic (e.g. weakly compactly generated) 
Banach space is WLD, but the converse does not hold in general, see e.g. \cite{fab-alt-JJ} for more
information on WLD spaces. Finally, we recall that a locally convex Hausdorff space~$E$ is said to have the {\em Mazur property} if every
sequentially continuous linear functional $\varphi:E \to \mathbb{R}$ is continuous.

\section{Preliminaries on scalar measurability and total sets}\label{section:measurability}

Given any set $\Gamma \sub X^*$, we denote by $\sigma(\Gamma)$ the smallest $\sigma$-algebra on~$X$ for which each $x^*\in \Gamma$ is measurable
(as a real-valued function on~$X$). 
According to a result of Edgar (see~\cite[Theorem~2.3]{edgar1}), 
$\sigma(X^*)$ coincides with the Baire $\sigma$-algebra of~$(X,w)$, that is, $\sigma(X^*)=\Ba(X,w)$
(cf. \cite[Theorem~2.1]{Musial}). 
The following Banach space property was introduced by Gulisashvili~\cite{gul-J}:

\begin{defi}\label{defi:D} 
The Banach space $X$ is said to have {\em property ($\cD$)} if the equality $\Ba(X,w)=\sigma(\Gamma)$ 
holds for every total set $\Gamma\sub X^*$.
\end{defi}

Equivalently, $X$ has property ($\cD$) if and only if for every total set $\Gamma \sub X^*$ and every measurable space $(\tilde{\Omega},\tilde{\Sigma})$ we have:
\begin{quote} 
a function $f:\tilde{\Omega}\to X$ is {\em $\Gamma$-scalarly measurable} (i.e. $\langle x^*,f\rangle$ is measurable for every $x^*\in \Gamma$)
if and only if it is {\em scalarly measurable} (i.e. $\langle x^*,f\rangle$ is measurable for every $x^*\in X^*$).
\end{quote}

Any Banach space with $w^*$-angelic dual has property~($\cD$), see \cite[Theorem~1]{gul-J}. 
The converse fails in general: this is witnessed by the Johnson-Lindenstrauss space $JL_2(\mathcal{F})$ associated
to any maximal almost disjoint family $\mathcal{F}$ of infinite subsets of~$\mathbb{N}$ (see the introduction of \cite{avi-mar-rod}
and the references therein). More generally, {\em property~($\cD'$) implies property~($\cD$)}, see \cite[Proposition~12]{pli3}.

\begin{lem}\label{lem:subspaces}
If $X$ has property ($\cD$), then any subspace of~$X$ has property ($\cD$). 
\end{lem}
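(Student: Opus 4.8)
Let $Y \sub X$ be a (closed) subspace and suppose $X$ has property~($\cD$). I want to show that $\Ba(Y,w)=\sigma(\Lambda)$ for every total set $\Lambda \sub Y^*$. The inclusion $\sigma(\Lambda) \sub \Ba(Y,w)$ is automatic since every $y^*\in \Lambda \sub Y^*$ is $w$-continuous on~$Y$, hence $\Ba(Y,w)$-measurable, so the whole point is the reverse inclusion $\Ba(Y,w) \sub \sigma(\Lambda)$. The natural strategy is to lift the situation from~$Y$ to~$X$ via the Hahn--Banach theorem and then invoke property~($\cD$) of~$X$.

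First I would fix a total set $\Lambda \sub Y^*$ and, using Hahn--Banach, choose for each $y^*\in \Lambda$ an extension $\widehat{y^*}\in X^*$ with $\widehat{y^*}|_Y = y^*$; set $\Gamma := \{\widehat{y^*}: y^*\in \Lambda\} \sub X^*$. The key observation is that $\Gamma$ is total over~$X$: indeed, it is enough to check that $\Gamma$ separates points of the subspace~$Y$ (any set of functionals separating a dense — in fact, any — subspace... no: one must be careful here). Actually $\Gamma$ need not separate points of $X$ outside $Y$, so this is the subtle point. The fix is to enlarge $\Gamma$: let $\Gamma' := \Gamma \cup \Delta$, where $\Delta \sub X^*$ is any total set over~$X$ (e.g. a norming set, or all of~$X^*$ if one does not mind); then $\Gamma'$ is total over~$X$, so by property~($\cD$) of~$X$ we get $\Ba(X,w) = \sigma(\Gamma')$. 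This does not yet give what I want because $\sigma(\Gamma')$ is too large; so instead I would argue more carefully and avoid enlarging.

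The cleaner route: show directly that $\Ba(Y,w)$ is generated by $\Lambda$ by comparing with the restriction of $\Ba(X,w)$. Note that the inclusion map $j:Y\hookrightarrow X$ is $w$-$w$ continuous, so $j^{-1}(\Ba(X,w)) \sub \Ba(Y,w)$; conversely, since each $y^*\in Y^*$ extends to some $x^*\in X^*$ with $y^* = x^*\circ j$, one has $\Ba(Y,w) = \sigma(Y^*) = \sigma(\{x^*\circ j : x^*\in X^*\}) = j^{-1}(\sigma(X^*)) = j^{-1}(\Ba(X,w))$. Thus $\Ba(Y,w) = \{j^{-1}(B): B\in \Ba(X,w)\}$. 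Now apply property~($\cD$) of~$X$ to the total set $\Gamma' = \Gamma \cup X^*$... but again this gives $\sigma(X^*)$, not $\sigma(\Gamma)$. The real content must therefore be: take $B \in \Ba(Y,w)$; I must write it using only sets of the form $(y^*)^{-1}(\text{Borel})$ for $y^*\in \Lambda$.

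Here is the step I expect to be the main obstacle, and how I would get past it. Consider the \emph{identity} map on~$Y$ viewed through the two total families $Y^*$ and $\Lambda$: by property~($\cD$) applied \emph{to the space $Y$ itself} we would be done immediately — but that is exactly what we are trying to prove, so we cannot use it. The correct device is to transfer the \emph{measurability-of-functions} formulation. Take any measurable space $(\widetilde\Omega,\widetilde\Sigma)$ and any $\Lambda$-scalarly measurable $g:\widetilde\Omega\to Y$. Compose with $j$: the function $f := j\circ g:\widetilde\Omega\to X$ satisfies $\langle \widehat{y^*},f\rangle = \langle y^*, g\rangle$ measurable for all $y^*\in\Lambda$, i.e. $f$ is $\Gamma$-scalarly measurable where $\Gamma=\{\widehat{y^*}:y^*\in\Lambda\}$. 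If $\Gamma$ were total over~$X$, property~($\cD$) of~$X$ would make $f$ scalarly measurable, hence $g=f$ (as a $Y$-valued map, using that $Y$ carries the subspace topology and $X^*|_Y = Y^*$) scalarly measurable, giving property~($\cD$) for~$Y$. To secure totality of~$\Gamma$ over~$X$, I would pick the Hahn--Banach extensions with more care: for a \emph{norming} total $\Lambda$ (the general case reduces to this, since one can always renorm, or work with $\overline{\mathrm{span}}^{\,\|\cdot\|}(\Lambda)=Y^*$ — totality is a span-invariant notion here) one can arrange $\|\widehat{y^*}\| = \|y^*\|$ and the extensions still need not separate $X$; so the honest conclusion is that \textbf{one genuinely must enlarge}: replace $X$ by the quotient is wrong too.

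So the plan I commit to: Let $\Lambda\sub Y^*$ be total; extend to $\Gamma_0\sub X^*$ by Hahn--Banach; put $\Gamma:=\Gamma_0\cup Z^\perp$-complement... cleanest is: $\Gamma := \Gamma_0 + \{x^*\in X^*: x^*|_Y = 0\}$. This $\Gamma$ \emph{is} total over~$X$: if $x\in X\setminus\{0\}$ and $x\notin Y$, pick $x^*$ vanishing on~$Y$ with $x^*(x)\neq 0$; if $x\in Y\setminus\{0\}$, pick $y^*\in\Lambda$ with $y^*(x)\neq 0$ and use $\widehat{y^*}$. Given $\Lambda$-scalarly measurable $g:\widetilde\Omega\to Y$, the map $f=j\circ g:\widetilde\Omega\to X$ is $\Gamma$-scalarly measurable: for $\widehat{y^*}+z^*$ with $z^*|_Y=0$ we have $\langle\widehat{y^*}+z^*, f\rangle = \langle y^*,g\rangle$, measurable. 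By property~($\cD$) of~$X$, $f$ is scalarly measurable, i.e. $\langle x^*, j\circ g\rangle = \langle x^*|_Y, g\rangle$ is measurable for all $x^*\in X^*$; since $X^*|_Y = Y^*$, $g$ is scalarly measurable. Hence $Y$ has property~($\cD$). The only thing to double-check — and the main obstacle — is precisely that this $\Gamma$ is total, which the case split above handles.
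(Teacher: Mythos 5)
Your committed plan is correct and is essentially the paper's own proof: both hinge on extending the total set $\Lambda \sub Y^*$ by Hahn--Banach and then adjoining the annihilator $Y^\perp$ to obtain a set that is total over~$X$, with totality verified by exactly the separation argument $Y^{\perp\perp}\cap X=Y$ that underlies your case split. The only difference is presentational: you run the argument through the equivalent function-measurability formulation of property~($\cD$) (which the paper records right after the definition), whereas the paper works directly with the $\sigma$-algebra identities $\Ba(Y,w)=\{Y\cap A: A\in \Ba(X,w)\}$ and $\sigma(\Lambda)=\{Y\cap A: A\in \sigma(\tilde{\Gamma}\cup Y^\perp)\}$.
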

\begin{proof}
Let $Y \sub X$ be a subspace. Given any set $\Gamma \sub Y^*$, for each $\gamma \in \Gamma$
we take $\tilde{\gamma}\in X^*$ such that $\tilde{\gamma}|_Y=\gamma$. Define $\tilde{\Gamma}:=\{\tilde{\gamma}:\gamma\in \Gamma\}\sub X^*$.
The following three statements hold true without the additional assumption on~$X$: 
\begin{enumerate}
\item[(i)] $\Ba(Y,w)=\{Y\cap A: A\in \Ba(X,w)\}$, as an application of the aforementioned result of Edgar
(see e.g. \cite[Corollary~2.2]{Musial}).
\item[(ii)] $\sigma(\Gamma)=\{Y\cap A: A\in \sigma(\tilde{\Gamma}\cup Y^\perp)\}$. Indeed, since every
element of $\tilde{\Gamma}\cup Y^\perp$ is $\sigma(\Gamma)$-measurable when restricted to~$Y$, the inclusion
operator from~$Y$ into~$X$ is ($\sigma(\Gamma)$-($\sigma(\tilde{\Gamma}\cup Y^\perp)$)-measurable, that is,
$$
	\mathcal{A}:=\{Y\cap A: \, A\in \sigma(\tilde{\Gamma}\cup Y^\perp)\} \sub \sigma(\Gamma).
$$
On the other hand, $\mathcal{A}$ is a $\sigma$-algebra on~$Y$ for which every element of~$\Gamma$ is $\mathcal{A}$-measurable,
hence $\sigma(\Gamma)=\mathcal{A}$. 
\item[(iii)] If $\Gamma$ is total (over~$Y$), then $\tilde{\Gamma}\cup Y^\perp$ is total (over~$X$). Indeed, just bear in mind that
$Y^{\perp\perp}\cap X=Y$ (by the Hahn-Banach separation theorem).
\end{enumerate} 
So, if $X$ has property~($\cD$) and $\Gamma$ is total, then  (i), (ii) and~(iii) 
ensure that $\Ba(Y,w)=\sigma(\Gamma)$.
This shows that $Y$ has property~($\cD$).
\end{proof}

It is easy to check that any WLD Banach space has $w^*$-angelic dual (so it has property~($\cD$)).
On the other hand, a non-trivial fact is that any WLD Banach space has a Markushevich basis (see e.g. \cite[Theorem~5.37]{fab-alt-JJ}).
The following result might be compared with that of Vanderwerff, Whitfield and Zizler 
that a Banach space having a Markushevich basis and Corson's property~($\mathcal{C}$) is WLD
(see \cite[Theorem~3.3]{van-alt}, cf. \cite[Theorem~5.37]{fab-alt-JJ}).

\begin{pro}\label{pro:Mbasis}
If $X$ has a Markushevich basis and property~($\cD$), then $X$ is WLD.
\end{pro}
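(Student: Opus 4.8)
The plan is to combine the Markushevich basis with property~($\cD$) to force $(B_{X^*},w^*)$ to be Corson. Let $\{(x_i,x_i^*):i\in I\}$ be a Markushevich basis of~$X$; recall this means $\{x_i^*:i\in I\}\sub X^*$ is total over~$X$, $\overline{{\rm span}}\{x_i:i\in I\}=X$, and $x_i^*(x_j)=\delta_{ij}$. The first step is to observe that since $\Gamma:={\rm span}\{x_i^*:i\in I\}$ is a total linear subspace of~$X^*$, property~($\cD$) gives $\sigma(\Gamma)=\Ba(X,w)$. I would then aim to show that every $x\in X$ is ``countably supported'' with respect to the basis, i.e. the set $\{i\in I:x_i^*(x)\neq 0\}$ is countable; the natural map $x\mapsto(x_i^*(x))_{i\in I}$ would then embed $X$ into $\Sigma$-products, which is exactly the shape of WLD.

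The key step is the countable support claim, and I expect this to be the main obstacle. To prove it I would argue by contradiction: suppose some $x_0\in X$ has uncountable support $S$. The idea is to build a set that is $\sigma(\Gamma)$-measurable (being describable using only countably many coordinate functionals at a time) but not in $\Ba(X,w)=\sigma(X^*)$, contradicting property~($\cD$). Concretely, one shows that the set $\{x\in X: x_i^*(x)=0 \text{ for all but countably many } i\}$—the ``$\Sigma$-product inside $X$''—is $\sigma(\Gamma)$-measurable (each ``tail'' condition involves only countably many functionals, and one takes a countable union over rationals / a suitable directed family), yet if some element fails to lie in it, this set is a proper $\sigma(X^*)$-measurable-or-not subset; more carefully, one uses that a Baire set of $(X,w)$ depends on only countably many functionals from any given total family in a suitable sense, yielding that $x_0$ itself must be countably supported. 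The cleanest route is probably: the evaluation map $E:X\to\mathbb{R}^I$, $E(x)=(x_i^*(x))_i$, is $\sigma(\Gamma)$-to-Baire measurable; by property~($\cD$) it is $\sigma(X^*)$-to-Baire measurable; and any such map into $\mathbb{R}^I$ with $\sigma(X^*)=\Ba(X,w)$ countably generated on separable pieces forces each fiber description, ultimately the image to land in the $\Sigma$-product $\{t\in\mathbb{R}^I:\{i:t(i)\neq 0\}\text{ countable}\}$.

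Once the countable support property is established, the conclusion is routine: the map $\Phi:(B_{X^*},w^*)\to[-1,1]^{I}$ is not quite available, so instead I would use the support property of the $x_i$'s on $X^*$. Actually the symmetric statement needed is that each $x^*\in B_{X^*}$ has countable support $\{i: x^*(x_i)\neq 0\}$ — but this is automatic once $X=\overline{{\rm span}}\{x_i\}$ and each $x$ has the property, no: one applies the argument to the biorthogonal system on $X$ viewed through $X^{**}$, or more simply notes that WLD is equivalent to $\{i:x^*(x_i)\neq 0\}$ being countable for all $x^*\in X^*$ when $\{x_i\}$ spans and $\{x_i^*\}$ is total (see \cite[Theorem~5.37]{fab-alt-JJ}). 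So I would reduce exactly to that criterion and deduce $X$ is WLD. The delicate point throughout is transferring the measurability information supplied by property~($\cD$) into a genuine countability-of-support statement, and I would lean on Edgar's theorem $\sigma(X^*)=\Ba(X,w)$ together with the fact that Baire sets are determined by countably many coordinates.
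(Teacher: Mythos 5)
There is a genuine gap: you aim your countability argument at the wrong object. The claim that occupies the bulk of your proposal --- that every $x\in X$ has countable support $\{i\in I: x_i^*(x)\neq 0\}$ --- is automatic for \emph{any} Markushevich basis (write $x$ as a norm-limit of finite linear combinations of the $x_i$ and use biorthogonality), needs no property~($\cD$), and does not imply WLD: the canonical basis of $\ell_1(\omega_1)$ is a Markushevich basis for which every vector is countably supported, yet $\ell_1(\omega_1)$ is not WLD (and indeed fails property~($\cD$) by Corollary~\ref{cor:Plichko-Gulisashvili-l1}). What the criterion of \cite[Theorem~5.37]{fab-alt-JJ} requires is the dual statement: for every $x^*\in X^*$ the set $\{i\in I: x^*(x_i)\neq 0\}$ is countable. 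You do recognize this at the very end, but your proposed fix (``apply the argument to the biorthogonal system viewed through $X^{**}$'') is not carried out and would not follow from property~($\cD$) of~$X$, which constrains total subsets of $X^*$, not of $X^{**}$; it is certainly not ``automatic'' from the countable support of elements of~$X$.

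The correct use of property~($\cD$) is the one step you skip. Property~($\cD$) gives $\Ba(X,w)=\sigma(\{x_i^*:i\in I\})$, so every $x^*\in X^*$ is $\sigma(\{x_i^*:i\in I\})$-measurable as a function on~$X$; a function measurable with respect to a $\sigma$-algebra generated by a family of maps depends on only countably many of them, and the paper invokes \cite[Lemma~3.5]{rod-ver} to get the sharper statement $x^*\in \overline{{\rm span}(\{x_i^*:i\in I_{x^*}\})}^{w^*}$ for some countable $I_{x^*}\sub I$. Biorthogonality then forces $x^*(x_j)=0$ for all $j\in I\setminus I_{x^*}$ (every element of ${\rm span}(\{x_i^*:i\in I_{x^*}\})$ vanishes at $x_j$, and evaluation at $x_j$ is $w^*$-continuous), which is exactly the WLD criterion. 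In short: the measurability information supplied by property~($\cD$) must be applied to an arbitrary $x^*\in X^*$ as a $\sigma(\{x_i^*\})$-measurable function, not to the evaluation map $x\mapsto (x_i^*(x))_{i\in I}$; your proposal never makes that transfer.
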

\begin{proof} 
Let $\{(x_i,x_i^*):i\in I\}\sub X\times X^*$ be a Markushevich basis of~$X$. Then $\{x_i^*:i\in I\}$ is total and property ($\cD$) ensures that
$\Ba(X,w)=\sigma(\{x_i^*:i\in I\})$.
Take any $x^*\in X^*$. By \cite[Lemma~3.5]{rod-ver}, there is a countable
set $I_{x^*}\sub I$ such that $x^* \in \overline{{\rm span}(\{x_i^*: \, i\in I_{x^*}\})}^{w^*}$,
hence $x^*(x_i)=0$ for every $i\in I\setminus I_{x^*}$. This implies that $X$ is WLD
(see e.g. \cite[Theorem~5.37]{fab-alt-JJ}).
\end{proof}

Bearing in mind that $\ell_1(\omega_1)$ is not WLD, Lemma~\ref{lem:subspaces} and Proposition~\ref{pro:Mbasis}
yield the following known result (cf. \cite[Lemma~11]{pli3}).

\begin{cor}\label{cor:Plichko-Gulisashvili-l1}
A Banach space having property~($\cD$) contains no subspace isomorphic to~$\ell_1(\omega_1)$.
\end{cor}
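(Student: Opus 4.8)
The plan is to argue by contradiction, using the two results just established as black boxes. Suppose that $X$ has property~($\cD$) but that $X$ contains a subspace $Y$ which is isomorphic to~$\ell_1(\omega_1)$; fix an isomorphism $S\colon \ell_1(\omega_1)\to Y$. By Lemma~\ref{lem:subspaces}, $Y$ inherits property~($\cD$) from~$X$, so the strategy is to show that $Y$ also carries a Markushevich basis and then to invoke Proposition~\ref{pro:Mbasis} to conclude that $Y$ is WLD; since being WLD is an isomorphic invariant, $\ell_1(\omega_1)$ would then be WLD as well, which is false.

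For the Markushevich basis I would start from the canonical one in $\ell_1(\omega_1)$: the unit vectors $\{e_\alpha:\alpha<\omega_1\}$, whose closed linear span is all of~$\ell_1(\omega_1)$, together with the coordinate functionals $\{e_\alpha^*:\alpha<\omega_1\}\sub \ell_\infty(\omega_1)=\ell_1(\omega_1)^*$, which separate the points of~$\ell_1(\omega_1)$ and satisfy $\langle e_\beta^*,e_\alpha\rangle=\delta_{\alpha\beta}$. Transporting this system through~$S$, namely taking $\{(S e_\alpha,\,(S^{-1})^* e_\alpha^*):\alpha<\omega_1\}\sub Y\times Y^*$, yields a Markushevich basis of~$Y$: biorthogonality, totality of the functionals, and density of the span are all preserved by the (bounded, invertible) map~$S$. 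Thus $Y$ has a Markushevich basis and property~($\cD$), and Proposition~\ref{pro:Mbasis} gives that $Y$ is WLD.

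It then remains to recall why $\ell_1(\omega_1)$ fails to be WLD: its dual unit ball $(B_{\ell_\infty(\omega_1)},w^*)$ is homeomorphic to the cube $[-1,1]^{\omega_1}$, and an uncountable product of non-degenerate intervals is not a Corson compact (the constant vector $\mathbf 1$ has uncountable support), so $(B_{X^*},w^*)$ is not Corson when $X=\ell_1(\omega_1)$. This contradicts the conclusion of the previous paragraph and finishes the proof. There is no real obstacle here: the only substantial input is Proposition~\ref{pro:Mbasis}, which is already available, and the argument is just the observation that $\ell_1(\omega_1)$ is a Markushevich-basis space together with the combination of Lemma~\ref{lem:subspaces} and Proposition~\ref{pro:Mbasis}.
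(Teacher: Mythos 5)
Your proof is correct and is essentially the paper's own argument, which obtains the corollary in one line by combining Lemma~\ref{lem:subspaces}, Proposition~\ref{pro:Mbasis} (applied to the copy of~$\ell_1(\omega_1)$, which carries the canonical Markushevich basis) and the known fact that $\ell_1(\omega_1)$ is not WLD. One small caveat: your reason that $[-1,1]^{\omega_1}$ is not Corson --- that the constant function $\mathbf{1}$ has uncountable support --- only disqualifies the canonical embedding rather than every embedding into a cube; the standard justification is that Corson compacta are countably tight (even Fr\'echet--Urysohn) whereas $[-1,1]^{\omega_1}$ is not, but this is a classical fact which the paper likewise assumes without proof.
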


To the best of our knowledge, the next question remains open:

\begin{question}[Gulisashvili, \cite{gul-J}]\label{question:G}
Is $(X^{**},w^*)$ angelic whenever $X^*$ has property~($\cD$)?
\end{question}

This question has affirmative answer if $X$ is separable (see \cite[Theorem~2]{gul-J}).
Indeed, the Odell-Rosenthal and Bourgain-Fremlin-Talagrand theorems ensure
that $(X^{**},w^*)$ is angelic whenever $X$ is a separable Banach space such that $X\not \supseteq \ell^1$
(see e.g. \cite[Theorem~4.1]{van}). On the other hand, property~($\cD$) of~$X^*$ implies that $X\not\supseteq \ell^1$, even 
for a non-separable~$X$. Indeed, this follows from the following result (see \cite[Proposition~3.9]{rod-ver}):

\begin{fact}\label{fact:properties-dual-D}
Let us consider the following statements:
\begin{enumerate}
\item[(i)] $X^*$ has property~($\mathcal{D}$).
\item[(ii)] $\Ba(X^*,w)=\sigma(X)$.
\item[(iii)] $X$ is $w^*$-sequentially dense in~$X^{**}$.
\item[(iv)] $X\not\supseteq \ell_1$.
\end{enumerate}
Then (i)$\impli$(ii)$\Leftrightarrow$(iii)$\impli$(iv).
\end{fact}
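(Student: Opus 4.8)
The plan is to prove the four assertions by treating each implication or equivalence separately, relying on the characterizations of property~($\mathcal{D}$) and Edgar's identification of $\sigma(X^*)$ with $\Ba(X,w)$. For the implication (i)$\impli$(ii), observe that $X$, viewed as a subset of $X^{**}$ via the canonical embedding, is a total linear subspace of $(X^*)^*$ (it separates the points of~$X^*$). Hence, if $X^*$ has property~($\mathcal{D}$), then by Definition~\ref{defi:D} applied to the total set $\Gamma=X\sub X^{**}$ we get $\Ba(X^*,w)=\sigma(X^{**})=\sigma(X)$, which is exactly~(ii).

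For the equivalence (ii)$\Leftrightarrow$(iii), the natural route is to test scalar measurability of the identity map. Assume (ii). Since $\Ba(X^*,w)=\sigma(X^*{}^*)$ always holds by Edgar's theorem, (ii) says $\sigma(X)=\sigma(X^{**})$, i.e.\ every $x^{**}\in X^{**}$ is $\sigma(X)$-measurable as a real-valued function on~$X^*$. A standard functional-monotone-class (or generator) argument then shows that each such $x^{**}$ is a pointwise limit of a sequence from $\mathrm{span}(X)$ on $X^*$; combined with a boundedness/truncation step, this yields a sequence $(x_n)$ in~$X$ with $x_n \to x^{**}$ in the $w^*$-topology of~$X^{**}$, giving~(iii). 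Conversely, if $X$ is $w^*$-sequentially dense in~$X^{**}$, then every $x^{**}$ is a $w^*$-limit of a sequence in~$X$, hence $\sigma(X)$-measurable, so $\sigma(X^{**})\sub\sigma(X)$; the reverse inclusion is trivial, and (ii) follows via Edgar's theorem. I would keep the measurability-of-the-identity formulation front and center here, since it makes the passage between the $\sigma$-algebra statement and the sequential-density statement transparent.

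For (iii)$\impli$(iv), I would argue by contraposition using the classical fact that $\ell_1$ is not $w^*$-sequentially dense in its bidual $\ell_\infty^*$: the unit vector basis of $\ell_\infty\sub\ell_1^{**}$ (or, more precisely, any weak$^*$ limit point phenomenon witnessing the non-angelicity of $(\ell_\infty^*,w^*)$) shows there is a point of $\ell_1^{**}$ not reachable by a $w^*$-convergent sequence from~$\ell_1$. If $X$ contained a subspace $Z\cong\ell_1$, one would lift such a bad point to $X^{**}$ and contradict~(iii); the lifting uses that a $w^*$-convergent sequence in~$X^{**}$ restricts, via a norm-one projection-type argument or simply via $Z^{**}\sub X^{**}$ and the adjoint of the inclusion $Z\hookrightarrow X$, to a $w^*$-convergent sequence in $Z^{**}$ — this last point is the step that needs a little care, and is the main (mild) obstacle, since one must ensure the restriction of a $w^*$-limit is still a $w^*$-limit for the subspace. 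Alternatively, (iii)$\impli$(iv) follows immediately once (i)--(iii) are in hand by invoking Corollary~\ref{cor:Plichko-Gulisashvili-l1} together with the observation that $\ell_1(\omega_1)$ is not needed — one only needs the separable obstruction $\ell_1$, for which one can quote the well-known fact that a Banach space that is $w^*$-sequentially dense in its bidual cannot contain~$\ell_1$. I expect the bulk of the writing to go into making the functional-monotone-class step in (ii)$\Leftrightarrow$(iii) precise, while the other implications are essentially bookkeeping on top of Edgar's theorem and Definition~\ref{defi:D}.
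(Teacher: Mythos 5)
First, note that the paper does not prove this Fact at all: it is quoted verbatim from \cite[Proposition~3.9]{rod-ver}, so any comparison is with the argument one would actually have to supply. Your steps (i)$\impli$(ii) (apply property~($\cD$) of~$X^*$ to the total linear subspace $X\sub X^{**}$) and (iii)$\impli$(ii) (a $w^*$-limit of a sequence from~$X$ is $\sigma(X)$-measurable, and Edgar's theorem gives $\Ba(X^*,w)=\sigma(X^{**})$) are correct and are the easy parts. The problem is the implication (ii)$\impli$(iii). A ``functional monotone class'' argument starting from $X$ only shows that a $\sigma(X)$-measurable function lies in the smallest class containing $X$ and closed under pointwise \emph{iterated} sequential limits, i.e.\ it can sit at an arbitrary level of the Baire hierarchy over a countable subfamily of~$X$; it does \emph{not} produce a single sequence $(x_n)$ in $\mathrm{span}(X)$ converging pointwise to~$x^{**}$. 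The collapse from ``Borel/Baire measurable on $(B_{X^*},w^*)$'' down to ``first Baire class, hence a $w^*$-limit of one sequence from~$X$'' is exactly the deep content here: after reducing to a separable subspace $Y$ (factor $x^{**}$ through the restriction map $X^*\to Y^*$ using that it is $\sigma(\{x_n\})$-measurable for some countable $\{x_n\}$), one must invoke the Odell--Rosenthal circle of results (see e.g. \cite[Theorem~4.1]{van}): for separable $Y$, an element of $Y^{**}$ that is universally (in particular Borel) measurable on $(B_{Y^*},w^*)$ is automatically of first Baire class and is the $w^*$-limit of a sequence from~$Y$. This is a theorem, not bookkeeping, and it is precisely where $\ell_1$ enters; your sketch conceals it.

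The implication (iii)$\impli$(iv) has a second, related gap. There is no natural ``restriction'' map $X^{**}\to Z^{**}$ for a subspace $Z\sub X$ (the adjoint of the inclusion goes the other way, $Z^{**}\to X^{**}$), so if a bad point $\eta\in \ell_1^{**}$ is lifted to $j^{**}(\eta)\in X^{**}$ and written as a $w^*$-limit of a sequence $(y_k)\sub X$, you cannot push the $y_k$ back into $Z$ to contradict anything about~$\ell_1$. The standard proof instead observes that if $j^{**}(\eta)=w^*\mbox{-}\lim y_k$ then $j^{**}(\eta)$ is of first Baire class on $(B_{X^*},w^*)$, hence $\eta$ is universally measurable on $(B_{\ell_\infty},w^*)$ (via the $w^*$-continuous surjection $j^*$ and an analytic-image argument), whereas $\eta$ chosen as a $w^*$-cluster point of the unit basis along a free ultrafilter is a non-measurable $\{0,1\}$-valued finitely additive measure on~$\Nat$. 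So the measurability machinery you tried to avoid reappears here too. The honest alternative you mention --- simply citing the known facts that, for separable spaces, universal measurability of all of $X^{**}$ forces $X\not\supseteq\ell_1$ and $w^*$-sequential density (Odell--Rosenthal), and that $w^*$-sequential density fails for $\ell_1$ --- is exactly what \cite{rod-ver} does, and is the correct way to write this up.
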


It is known that~$X^*$ is WLD if and only if $X$ is Asplund
and $X$ is $w^*$-sequentially dense in~$X^{**}$, see \cite[Theorem~III-4, Remarks~III-6]{dev-god} and \cite[Corollary~8]{ori1}.
Recall that $X$ is said to be {\em Asplund} if every subspace of~$X$ has separable dual or, equivalently, $X^*$ has the RNP
(see e.g. \cite[p.~198]{die-uhl-J}). Therefore, Question~\ref{question:G} has affirmative answer for Asplund spaces:

\begin{cor}\label{cor:Asplund}
If $X$ is Asplund and $X^*$ has property~($\cD$), then $X^*$ is WLD. 
\end{cor}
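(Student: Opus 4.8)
The plan is to combine the two external facts just recalled with Fact~\ref{fact:properties-dual-D}. Recall the cited characterization: $X^*$ is WLD if and only if $X$ is Asplund \emph{and} $X$ is $w^*$-sequentially dense in~$X^{**}$. So, assuming $X$ is Asplund, it suffices to show that the hypothesis ``$X^*$ has property~($\cD$)'' forces $X$ to be $w^*$-sequentially dense in~$X^{**}$. But this is precisely the implication (i)$\impli$(iii) of Fact~\ref{fact:properties-dual-D}, which goes through (ii): property~($\cD$) of~$X^*$ gives $\Ba(X^*,w)=\sigma(X)$, and that equality is equivalent to $w^*$-sequential density of~$X$ in~$X^{**}$.

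Concretely, I would write: let $X$ be Asplund with $X^*$ having property~($\cD$). By Fact~\ref{fact:properties-dual-D}, (i)$\impli$(iii), $X$ is $w^*$-sequentially dense in~$X^{**}$. Since $X$ is also Asplund, the equivalence from \cite[Theorem~III-4, Remarks~III-6]{dev-god} and \cite[Corollary~8]{ori1} yields that $X^*$ is WLD. That is the whole argument — it is a two-line deduction once the ingredients are lined up.

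There is essentially no obstacle here: the statement is a corollary precisely because all the real content sits in the previously stated results. The only thing to be a little careful about is making sure the direction of each cited biconditional is used correctly — in particular that the Asplund + $w^*$-sequential-density characterization of WLD duals is being applied in the ``$\Leftarrow$'' direction, feeding in the two hypotheses to conclude WLD, rather than the other way round. No new estimate, construction, or measurability argument is needed beyond what Fact~\ref{fact:properties-dual-D} already packages.
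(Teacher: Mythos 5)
Your proposal is correct and is exactly the paper's argument: the corollary is obtained by feeding the implication (i)$\impli$(iii) of Fact~\ref{fact:properties-dual-D} into the cited characterization that $X^*$ is WLD if and only if $X$ is Asplund and $X$ is $w^*$-sequentially dense in~$X^{**}$. Nothing further is needed.
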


In particular, since a Banach lattice is Asplund if (and only if) it contains no subspace isomorphic to~$\ell_1$ 
(see \cite[p.~95]{die-uhl-J} and \cite[Theorem~7]{gho-saa-J}), we have:

\begin{cor}\label{cor:lattice}
If $X$ is a Banach lattice and $X^*$ has property~($\cD$), then $X^*$ is WLD. 
\end{cor}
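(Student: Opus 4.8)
The plan is to reduce this statement to Corollary~\ref{cor:Asplund}, so essentially everything boils down to showing that under the hypotheses the Banach lattice $X$ is Asplund. First I would invoke Corollary~\ref{cor:Plichko-Gulisashvili-l1} (or, more directly, the implication (i)$\impli$(iv) of Fact~\ref{fact:properties-dual-D}): since $X^*$ has property~($\cD$), the space $X$ contains no subspace isomorphic to~$\ell_1$. In fact, as the excerpt notes just before the statement, the relevant fact here is the cleaner one that property~($\cD$) of~$X^*$ forces $X\not\supseteq\ell_1$ via Fact~\ref{fact:properties-dual-D}, so I would cite that rather than the $\ell_1(\omega_1)$ version.

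Next I would feed $X\not\supseteq \ell_1$ into the Banach lattice characterisation of Asplundness quoted in the excerpt: a Banach lattice is Asplund if and only if it contains no subspace isomorphic to~$\ell_1$ (the cited references being \cite[p.~95]{die-uhl-J} for one direction and \cite[Theorem~7]{gho-saa-J} for the other). Hence $X$ is Asplund. At this point the hypotheses of Corollary~\ref{cor:Asplund} are met: $X$ is Asplund and $X^*$ has property~($\cD$). Applying that corollary immediately yields that $X^*$ is WLD, which is exactly what we want.

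So the whole argument is a two-line chain: property~($\cD$) of~$X^*$ $\Rightarrow$ $X\not\supseteq\ell_1$ $\Rightarrow$ ($X$ a Banach lattice) $X$ Asplund $\Rightarrow$ (Corollary~\ref{cor:Asplund}) $X^*$ WLD. There is no real obstacle here; the content is entirely in the two black boxes being cited — Fact~\ref{fact:properties-dual-D} (which packages the Rosenthal $\ell_1$ theorem machinery) and the Ghoussoub--Saab lattice characterisation of Asplund spaces. The only thing to be a little careful about is to cite the right implication of Fact~\ref{fact:properties-dual-D}: we only need (i)$\impli$(iv), which holds without separability, so the conclusion is genuinely non-trivial only because the passage from $\ell_1$-freeness to Asplundness is special to lattices. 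I expect the proof in the paper to be essentially one sentence combining these references.
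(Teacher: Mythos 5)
Your argument is correct and is essentially identical to the paper's: property~($\cD$) of~$X^*$ gives $X\not\supseteq\ell_1$ via Fact~\ref{fact:properties-dual-D}, the Ghoussoub--Saab characterisation then makes the lattice~$X$ Asplund, and Corollary~\ref{cor:Asplund} concludes. Your remark that one should cite Fact~\ref{fact:properties-dual-D} rather than Corollary~\ref{cor:Plichko-Gulisashvili-l1} (which concerns $\ell_1(\omega_1)$ inside the space with property~($\cD$), i.e.~$X^*$) is also the right call.
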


\section{Integration and total sets}\label{section:integration}

Throughout this section $\Gamma\sub X^*$ is a total linear subspace.
Given a function $f:\Omega \to X$, we write
$$
	Z_{f,D}:=\{\langle x^*, f \rangle: \, x^*\in D\}
$$
for any set $D \sub X^*$. Note that if $f$ is $\Gamma$-integrable, then the map 
$$
	I_f: \Sigma \to X, 
	\quad I_f(A):=\int_A f\, d\nu,
$$ 
is a finitely additive vector measure vanishing on $\nu$-null sets. It is countably additive (and it is called the {\em indefinite Pettis integral}
of~$f$) in the particular case $\Gamma=X^*$. 

Statement~(i) of our next lemma is an application of a classical result of Diestel and Faires~\cite{die-fai},
while part~(iii) is similar to \cite[Lemma~3.1]{cas-rod-2}.
Recall first that a set $H\sub L_1(\nu)$ is called {\em uniformly integrable} if it is bounded
and for every $\epsilon>0$ there is $\delta>0$ such that $\sup_{h\in H}\int_A |h| \, d\nu\leq \epsilon$
for every $A\in \Sigma$ with $\nu(A)\leq\delta$.

\begin{lem}\label{lem:UniformlyIntegrableZf}
Let $f:\Omega \to X$ be a $\Gamma$-integrable function. 
\begin{enumerate}
\item[(i)] If $X \not \supseteq \ell_\infty$, then $I_f$ is countably additive.
\item[(ii)]  If $I_f$ is countably additive, then $Z_{f,\Gamma \cap B_{X^*}}$ is uniformly integrable.
\item[(iii)] If $I_f$ is countably additive and there is a partition $\Omega=\bigcup_{n\in \Nat} A_n$ into countably many measurable sets
such that each restriction $f|_{A_n}$ is Pettis integrable, then $f$ is Pettis integrable.
\end{enumerate}
\end{lem}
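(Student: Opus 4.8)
For part~(i), the plan is to invoke the Diestel--Faires theorem (see \cite{die-fai}): a finitely additive vector measure of bounded variation—or, more precisely, a bounded finitely additive vector measure—into a Banach space containing no copy of~$\ell_\infty$ that is strongly additive is countably additive; the relevant formulation states that a bounded finitely additive $X$-valued measure on a $\sigma$-algebra is countably additive provided $X \not\supseteq \ell_\infty$. Here $I_f$ is finitely additive and vanishes on $\nu$-null sets; one first checks boundedness of $I_f$, which follows because $Z_{f,\Gamma}$ is a pointwise bounded family of integrable functions on a probability space (each $\langle x^*,f\rangle \in L_1(\nu)$) and hence, via a Nikod\'{y}m-type or closed-graph argument applied to the total subspace $\Gamma$, the ranges $\{I_f(A):A\in\Sigma\}$ are norm bounded. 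Then the Diestel--Faires criterion applies directly. The main subtlety is confirming that the classical statement, usually phrased for $X^*$-valued or Pettis-type settings, transfers verbatim to the $\Gamma$-integral; since only boundedness and finite additivity of $I_f$ enter, this should be routine.

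For part~(ii), assume $I_f$ is countably additive. Then its range is bounded, so $\{\|I_f(A)\|:A\in\Sigma\}$ is bounded, whence for each $x^*\in\Gamma\cap B_{X^*}$ the scalar measure $A\mapsto \int_A \langle x^*,f\rangle\,d\nu = x^*(I_f(A))$ is uniformly bounded in total variation. Countable additivity of $I_f$ gives that $I_f$ is also $\nu$-continuous in the strong sense: $\|I_f(A)\|\to 0$ as $\nu(A)\to 0$ (a standard consequence of countable additivity plus $\nu$-nullity, via a Vitali--Hahn--Saks / exhaustion argument). Since $\int_A|\langle x^*,f\rangle|\,d\nu = \sup\{x^*(I_f(A_1)) - x^*(I_f(A_2)) : A = A_1\sqcup A_2\} \le 2\sup\{\|I_f(B)\|:B\subseteq A\}$, uniform integrability of $Z_{f,\Gamma\cap B_{X^*}}$ follows from the uniform smallness of $\|I_f(B)\|$ for $\nu(B)$ small. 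The expected obstacle here is a small computational point: relating $\int_A|\langle x^*,f\rangle|\,d\nu$ to $\sup_B\|I_f(B)\|$ uniformly over the unit ball of~$\Gamma$, which I would handle by splitting $\Omega$ into the positivity and negativity sets of $\langle x^*,f\rangle$ and using that $\Gamma \cap B_{X^*} \subseteq B_{X^*}$.

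For part~(iii), suppose $I_f$ is countably additive and $\Omega=\bigcup_{n}A_n$ with each $f|_{A_n}$ Pettis integrable. The natural candidate for $\int_A f\,d\nu$ (in the Pettis sense) is the $\|\cdot\|$-convergent sum $\sum_n \int_{A\cap A_n} f|_{A_n}\,d\nu$; one must show this series converges in norm and represents $\int_A f\,d\nu$ against all of~$X^*$, not merely against~$\Gamma$. The key is that for each $A\in\Sigma$, the partial sums are exactly $I_f\big(\bigcup_{k\le n}(A\cap A_k)\big)$—because on each $A_n$ the $\Gamma$-integral and the Pettis integral agree, both being computed by the same vector $\int_{A\cap A_n}f\,d\nu$, which the $\Gamma$-integrability of $f$ forces to coincide with the $\Gamma$-integral over $A\cap A_n$. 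Hence the partial sums form a norm-convergent sequence precisely by countable additivity of $I_f$, with limit $I_f(A)$. Finally, to upgrade from $\Gamma$ to all of $X^*$: for arbitrary $x^*\in X^*$, dominated convergence applied to the Pettis pieces gives $x^*(I_f(A)) = \sum_n x^*(\int_{A\cap A_n}f\,d\nu) = \sum_n \int_{A\cap A_n}\langle x^*,f\rangle\,d\nu = \int_A \langle x^*,f\rangle\,d\nu$, where the interchange of sum and integral is justified by the uniform integrability from part~(ii) (or directly by countable additivity). This last interchange—ensuring $\langle x^*,f\rangle$ is genuinely $\nu$-integrable over all of~$\Omega$ and equals the limit of its partial integrals—is the part requiring the most care, and I would lean on part~(ii) to supply the needed uniform control.
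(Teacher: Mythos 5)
Your overall plan coincides with the paper's proof: part~(i) is exactly the citation of the Diestel--Faires theorem, part~(ii) is the semivariation estimate $\sup_{x^*\in \Gamma\cap B_{X^*}}\int_A|\langle x^*,f\rangle|\,d\nu\leq \|I_f\|(A)$ combined with $\lim_{\nu(A)\to 0}\|I_f\|(A)=0$, and part~(iii) is the same two-step argument (identify the Pettis integral of $f|_{A_n}$ with $I_f$ on $\Sigma_{A_n}$ via totality of~$\Gamma$, then use unconditional convergence of $\sum_n I_f(E\cap A_n)$). Two of your justifications, however, would not survive as written.

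First, in~(i) you derive boundedness of $I_f$ from the pointwise boundedness of $\{I_f(A):A\in\Sigma\}$ on the \emph{total} subspace $\Gamma$; but $\sigma(X,\Gamma)$-boundedness for a merely total (not norming, not closed) $\Gamma$ does not imply norm boundedness, so no closed-graph or uniform-boundedness argument applies to $\Gamma$ alone. The correct (and standard) route is that every finitely additive vector measure on a $\sigma$-algebra is automatically bounded, because each composition $x^*\circ I_f$ with $x^*\in X^*$ (not just $x^*\in\Gamma$) is a real-valued finitely additive measure on a $\sigma$-algebra, hence bounded, and then the uniform boundedness principle applies; this is already built into the version of the Diestel--Faires theorem the paper cites, which also handles the passage from strong additivity to countable additivity using that $x^*\circ I_f$ is countably additive for $x^*$ in a total set. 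Second, in~(iii) you propose to ``lean on part~(ii)'' to justify that $\langle x^*,f\rangle$ is integrable and that the sum and integral may be interchanged for an \emph{arbitrary} $x^*\in X^*$. Part~(ii) only controls $Z_{f,\Gamma\cap B_{X^*}}$, and gives no bound on $\int|\langle x^*,f\rangle|\,d\nu$ for $x^*\notin\Gamma$; a dominated convergence argument likewise lacks a dominating integrable function. What actually works (and is what the paper does) is to use that $f|_{B_N}$, $B_N:=\bigcup_{n\leq N}A_n$, is Pettis integrable with indefinite integral $I_f|_{\Sigma_{B_N}}$, so that $\int_{B_N}|\langle x^*,f\rangle|\,d\nu\leq\|x^*\|\,\|I_f\|(\Omega)<\infty$ uniformly in~$N$; monotone convergence then yields $\langle x^*,f\rangle\in L_1(\nu)$, and countable additivity of the scalar measure $x^*\circ I_f$ gives the interchange. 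With these two repairs your argument becomes the paper's.
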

\begin{proof} (i) follows from \cite[Theorem~1.1]{die-fai} (cf. \cite[p.~23, Corollary~7]{die-uhl-J}). 

(ii) Since $I_f$ is countably additive, it is bounded
and $\lim_{\nu(A)\to 0} \|I_f\|(A)=0$, where $\|I_f\|(\cdot)$ 
denotes the semivariation of~$I_f$ (see e.g. \cite[p.~10, Theorem~1]{die-uhl-J}).
Now, the uniform integrability of $Z_{f,\Gamma \cap B_{X^*}}$ follows from the inequality
$$
	\sup_{x^*\in \Gamma \cap B_{X^*}} \int_A |\langle x^*, f \rangle| \, d\nu
	\leq \|I_f\|(A)
	\quad \mbox{for all }A\in \Sigma.
$$

(iii) We write $\Sigma_{A}:=\{B\cap A:B\in \Sigma\}$ for every $A\in \Sigma$.
Clearly, $f$ is scalarly measurable. We first prove that $f$ is {\em scalarly integrable}. Fix $x^*\in X^*$. Take any $N\in \Nat$ and
set $B_N:=\bigcup_{n=1}^N A_n$. Note that $f|_{B_N}$ is Pettis integrable and its indefinite Pettis integral
coincides with $I_f$ on $\Sigma_{B_N}$. Hence
$$
	\int_{B_N} |\langle x^*, f \rangle| \, d\nu\leq \|I_f\|(B_N)\leq \|I_f\|(\Omega) < \infty.
$$
As $N\in \Nat$ is arbitrary, $\langle x^*, f \rangle$ is integrable. This shows that $f$ is scalarly integrable.

Fix $E\in \Sigma$. Since $I_f$ is countably additive, the series $\sum_{n\in \Nat} I_f(E\cap A_n)$ is unconditionally convergent in~$X$ and for each $x^*\in X^*$ we have
$$
	x^*\Big(\sum_{n\in \Nat} I_f(E\cap A_n)\Big)=
	\sum_{n\in \Nat} x^*\big(I_f(E\cap A_n)\big)\stackrel{(*)}{=}
	\sum_{n\in \Nat} \int_{E\cap A_n} \langle x^*, f \rangle \, d\nu=
	\int_E \langle x^*, f \rangle \, d\nu,
$$
where equality~($*$) holds because $I_f|_{\Sigma_{A_n}}$ is the indefinite Pettis integral of $f|_{A_n}$ for every $n\in \Nat$.
This proves that $f$ is Pettis integrable.
\end{proof}

The Banach space $X$ is said to have the {\em $\nu$-Pettis Integral Property} ($\nu$-PIP)
if every scalarly measurable and scalarly bounded function $f:\Omega \to X$ is Pettis integrable. 
Recall that a function $f:\Omega \to X$ is called {\em scalarly bounded} if there is a constant $c>0$
such that for each $x^*\in X^*$ we have $|\langle x^*,f\rangle|\leq c\|x^*\|$ $\nu$-a.e. (the exceptional set depending on~$x^*$).
A Banach space is said to have the PIP if it has the $\nu$-PIP with respect to any complete probability measure~$\nu$. In general:
\begin{center}
$X$ has property~($\cD'$) \ $\impli$ \ $(X^*,w^*)$ has the Mazur property \ $\impli$ \ $X$ has the PIP.
\end{center}
Indeed, the first implication is easy to check, while the second one goes back to~\cite{edgar2} (cf. Theorem~\ref{theo:ScalarGamma}(i) below).

\begin{theo}\label{theo:Dprime}
Suppose $X$ has property~($\cD'$). Then every $\Gamma$-integrable
function $f:\Omega \to X$ is Pettis integrable.
\end{theo}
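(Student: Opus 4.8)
The plan is to reduce the statement to a separation argument in $X^*$, using the definition of property~($\cD'$). First, note that since property~($\cD'$) implies property~($\cD$), any $\Gamma$-integrable function~$f$ is automatically scalarly measurable (not just $\Gamma$-scalarly measurable). The crucial reduction is to fix $A\in \Sigma$ and show that the element $\int_A f\,d\nu\in X$ provided by $\Gamma$-integrability actually satisfies $x^*(\int_A f\,d\nu)=\int_A \langle x^*,f\rangle\,d\nu$ for \emph{all} $x^*\in X^*$, not merely for $x^*\in \Gamma$. For this it suffices to check that $\langle x^*,f\rangle\in L_1(\nu)$ for every $x^*\in X^*$ and that the linear functional on $X^*$ defined by $x^*\mapsto \int_A \langle x^*,f\rangle\,d\nu$ agrees with evaluation at $\int_A f\,d\nu$; once scalar integrability is established the second part follows from a density/separation argument.

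The key step is the scalar integrability of~$f$ with respect to all of~$X^*$. Consider the subspace
$$
	\Lambda:=\Big\{x^*\in X^*: \langle x^*,f\rangle \in L_1(\nu) \text{ and } x^*\Big(\int_A f\,d\nu\Big)=\int_A \langle x^*,f\rangle\,d\nu \ \text{for all } A\in\Sigma\Big\}.
$$
This is a linear subspace of~$X^*$ containing~$\Gamma$, hence it is total, and thus $w^*$-dense in~$X^*$. I would show $\Lambda$ is $w^*$-sequentially closed: if $x_n^*\in\Lambda$ and $x_n^*\to x^*$ in the $w^*$-topology, then $\langle x_n^*,f\rangle\to\langle x^*,f\rangle$ pointwise on~$\Omega$, and the sequence $(x_n^*)$ is norm-bounded by the uniform boundedness principle, so $(\langle x_n^*,f\rangle)$ is dominated in the sense needed; combining the uniform integrability of $Z_{f,\Gamma\cap B_{X^*}}$ from Lemma~\ref{lem:UniformlyIntegrableZf}(ii) (after first invoking part~(i) to get countable additivity of~$I_f$, which is legitimate since property~($\cD'$) implies $X\not\supseteq \ell_\infty$ via Corollary~\ref{cor:Plichko-Gulisashvili-l1}, as $\ell_\infty\supseteq\ell_1(\omega_1)$) with the Vitali convergence theorem yields $\langle x^*,f\rangle\in L_1(\nu)$ and $\int_A\langle x_n^*,f\rangle\,d\nu\to\int_A\langle x^*,f\rangle\,d\nu$ for every~$A$. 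Passing to the limit in the identity $x_n^*(\int_A f\,d\nu)=\int_A\langle x_n^*,f\rangle\,d\nu$ gives $x^*\in\Lambda$. Hence $\Lambda$ is a $w^*$-sequentially closed subspace of~$X^*$, and property~($\cD'$) forces $\Lambda=X^*$, which is exactly the Pettis integrability of~$f$.

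The main obstacle I anticipate is justifying the limit interchange $\int_A\langle x_n^*,f\rangle\,d\nu\to\int_A\langle x^*,f\rangle\,d\nu$: a $w^*$-convergent sequence is norm-bounded, say $\|x_n^*\|\le M$, but this only gives $\langle x_n^*,f\rangle\in M\cdot Z_{f,B_{X^*}}$ in norm, whereas the uniform integrability established in Lemma~\ref{lem:UniformlyIntegrableZf}(ii) is for $Z_{f,\Gamma\cap B_{X^*}}$, i.e. only over functionals from~$\Gamma$. One therefore has to be careful: either observe that the semivariation bound $\sup_{x^*\in B_{X^*}}\int_A|\langle x^*,f\rangle|\,d\nu\le\|I_f\|(A)$ actually holds over all of~$B_{X^*}$ once one knows $f$ is scalarly integrable (which is part of what we are proving — a mild circularity to be navigated), or bootstrap: first prove $\langle x^*,f\rangle\in L_1(\nu)$ for all $x^*$ by a separate argument (e.g. applying Lemma~\ref{lem:UniformlyIntegrableZf}(iii) or a localization/exhaustion argument together with the PIP-type consequences of property~($\cD'$)), and only then run the uniform integrability estimate over the full ball. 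Getting this bootstrapping order right is the delicate point; everything else is routine once scalar integrability over~$X^*$ is in hand.
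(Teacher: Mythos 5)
Your overall strategy --- show that your set $\Lambda$ is a $w^*$-sequentially closed linear subspace of~$X^*$ containing the total subspace~$\Gamma$ (hence $w^*$-dense), and then invoke property~($\cD'$) to conclude $\Lambda=X^*$ --- is sound in outline and is a genuinely different, arguably more direct, use of the hypothesis than the paper's. But the step you yourself flag is a real gap, not a formality. To prove $\Lambda$ is $w^*$-sequentially closed you take $(x_n^*)\sub\Lambda$ with $x_n^*\to x^*$ in~$w^*$ and need $\{\langle x_n^*,f\rangle\}$ to be uniformly integrable before Vitali's convergence theorem applies. Lemma~\ref{lem:UniformlyIntegrableZf}(ii) only gives uniform integrability of $Z_{f,\Gamma\cap B_{X^*}}$, and your $x_n^*$ live in~$\Lambda$, not in~$\Gamma$; norm-boundedness of the sequence is of no help, since integrability of each individual $\langle x_n^*,f\rangle$ carries no uniform estimate. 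Of your two suggested repairs, the first is circular (as you note) and the second is only gestured at. The gap can, however, be closed inside your framework: the set functions $\mu_n(A):=\int_A\langle x_n^*,f\rangle\,d\nu=x_n^*(I_f(A))$ are countably additive, $\nu$-continuous, and converge setwise to $x^*(I_f(\cdot))$, so the Vitali--Hahn--Saks and Nikod\'{y}m boundedness theorems yield exactly the uniform integrability of $\{\langle x_n^*,f\rangle\}$ that you need; Vitali's convergence theorem then gives $\langle x^*,f\rangle\in L_1(\nu)$ and lets you pass to the limit in $x_n^*(I_f(A))=\int_A\langle x_n^*,f\rangle\,d\nu$. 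With that insertion your argument is complete.

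For comparison, the paper sidesteps this difficulty entirely: it uses property~($\cD$) to get scalar measurability of~$f$, partitions $\Omega$ into countably many sets on which $f$ is scalarly bounded, applies the $\nu$-PIP of~$X$ (which follows from the Mazur property of $(X^*,w^*)$, itself a consequence of property~($\cD'$)) to make each restriction Pettis integrable, and then glues via parts (i) and (iii) of Lemma~\ref{lem:UniformlyIntegrableZf}. Your route, once repaired, applies property~($\cD'$) directly to a concrete $w^*$-sequentially closed subspace instead of going through the Mazur/PIP machinery and the exhaustion argument; the price is that the hard analytic content gets shifted onto the Vitali--Hahn--Saks theorem rather than onto the PIP.
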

\begin{proof}
Since $X$ has property~($\cD$), $f$ is scalarly measurable. Then there is 
a partition $\Omega=\bigcup_{n\in \Nat} A_n$ into countably many measurable sets
such that $f|_{A_n}$ is scalarly bounded for every $n\in \Nat$ (see e.g. \cite[Proposition~3.1]{Musial}).
On the other hand, $X$~has the PIP acording to the comments preceeding the theorem. Therefore, each $f|_{A_n}$ is Pettis integrable.
Bearing in mind that a Banach space having property~($\cD$) cannot contain subspaces isomorphic to~$\ell_\infty$
(by Corollary~\ref{cor:Plichko-Gulisashvili-l1} and the fact that $\ell_1(\mathfrak{c})$ embeds
isomorphically into~$\ell_\infty$), an appeal to Lemma~\ref{lem:UniformlyIntegrableZf}
allows us to conclude that $f$ is Pettis integrable.
\end{proof}

\begin{rem}
From the proof of Theorem~\ref{theo:Dprime} it follows that the result still holds true if $X$ has property~($\cD$) and the $\nu$-PIP. 
There exist Banach spaces having the PIP but failing property~($\cD$), like $\ell_1(\omega_1)$ (see \cite[Theorem~5.10]{edgar2}, 
cf. \cite[Proposition~7.2]{Musial}).
However, we do not know an example of a Banach space having property~($\cD$) but failing the PIP.
\end{rem}

\begin{question}\label{question:PIP}
Does property~($\cD$) imply the PIP?
\end{question}

The previous question has affirmative answer for dual spaces. Indeed, if $X^*$
has property~($\cD$), then $X$ is $w^*$-sequentially dense in~$X^{**}$
(Fact~\ref{fact:properties-dual-D}) and so $(X^{**},w^*)$ has the Mazur property, as
it can be easily checked. As a consequence:

\begin{cor}\label{cor:dual}
Suppose $X^*$ has property~($\cD$) and let $\tilde{\Gamma} \sub X^{**}$ be a total linear subspace. 
Then every $\tilde{\Gamma}$-integrable function $f:\Omega \to X^*$ is Pettis integrable.
\end{cor}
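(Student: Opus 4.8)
The plan is to deduce the statement from the remark following Theorem~\ref{theo:Dprime}, which strengthens that theorem by requiring only property~($\cD$) together with the $\nu$-PIP. Reading that remark with the Banach space~$X^*$ playing the role of~$X$ and with the total linear subspace $\tilde{\Gamma}\sub X^{**}$ of its dual, it suffices to check that $X^*$ has property~($\cD$)---which is the hypothesis---and the $\nu$-PIP.

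For the $\nu$-PIP of~$X^*$ I would show that $X^*$ has the full PIP, i.e. answer Question~\ref{question:PIP} affirmatively in the dual case. First, Fact~\ref{fact:properties-dual-D} applied to the hypothesis that $X^*$ has property~($\cD$) gives that $X$ is $w^*$-sequentially dense in~$X^{**}$. The next step is the elementary observation that this already forces $(X^{**},w^*)$ to have the Mazur property: if $\varphi:X^{**}\to\mathbb{R}$ is a $w^*$-sequentially continuous linear functional, then $\varphi|_X$ must be bounded (otherwise pick $x_n\in B_X$ with $|\varphi(x_n)|\geq n$ for all~$n\in\Nat$; then $x_n/\varphi(x_n)\to 0$ in norm, hence $w^*$, in~$X^{**}$, while $\varphi(x_n/\varphi(x_n))=1$ for all~$n$, a contradiction), so $\varphi|_X$ is the restriction to~$X$ of some $x^*\in X^*$; viewing $X^*\sub X^{***}$ as $w^*$-continuous functionals on~$X^{**}$, the difference $\varphi-x^*$ is $w^*$-sequentially continuous and vanishes on the $w^*$-sequentially dense set~$X$, hence $\varphi=x^*$ is $w^*$-continuous. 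Finally, the implication stating that the Mazur property of~$(X^*,w^*)$ yields the PIP of~$X$ (the second arrow in the chain preceding Theorem~\ref{theo:Dprime}), now applied to~$X^*$, shows that $X^*$ has the PIP; in particular it has the $\nu$-PIP for our fixed measure, so the remark after Theorem~\ref{theo:Dprime} applies and finishes the proof.

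I do not anticipate a real obstacle: the corollary is essentially bookkeeping, combining Theorem~\ref{theo:Dprime} in the form given by the subsequent remark, Fact~\ref{fact:properties-dual-D}, and the general implication from the Mazur property to the PIP. The only point that calls for an argument is the passage from ``$X$ is $w^*$-sequentially dense in~$X^{**}$'' to ``$(X^{**},w^*)$ has the Mazur property'', and even this is routine once the boundedness of~$\varphi|_X$ is disposed of as above.
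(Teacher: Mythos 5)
Your proposal is correct and follows essentially the same route as the paper: combine the remark after Theorem~\ref{theo:Dprime} (property~($\cD$) plus the $\nu$-PIP suffices) with Fact~\ref{fact:properties-dual-D} and the implication from the Mazur property of $(X^{**},w^*)$ to the PIP of~$X^*$. Your explicit verification that $w^*$-sequential density of~$X$ in~$X^{**}$ yields the Mazur property of $(X^{**},w^*)$ correctly fills in the step the paper dismisses with ``as it can be easily checked''.
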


Given any set $D \sub X^*$, we denote by $S_1(D) \sub X^*$ the set of all limits of $w^*$-convergent 
sequences contained in~$D$. More generally, for any ordinal $\alpha\leq \omega_1$, the $\alpha$-th $w^*$-sequential closure $S_\alpha(D)$
is defined by transfinite induction as follows: $S_0(D):=D$, $S_{\alpha}(D):=S_1(S_\beta(D))$ if $\alpha=\beta+1$ and $S_{\alpha}(D):=\bigcup_{\beta<\alpha}S_\beta(D)$
if $\alpha$ is a limit ordinal. Then $S_{\omega_1}(D)$ is the smallest $w^*$-sequentially closed subset of~$X^*$ containing~$D$.
Clearly, the Banach space $X$ has property ($\cD'$) (resp. ($\mathcal{E}'$)) 
if and only if $S_{\omega_1}(D)=\overline{D}^{w^*}$ for every subspace (resp. convex bounded set) $D \sub X^*$.

\begin{lem}\label{lem:ScalarGamma}
Let $f:\Omega \to X$ be a $\Gamma$-scalarly integrable function. For each $A\in \Sigma$, let $\varphi_{f,A}: \Gamma \to \mathbb{R}$ be the linear functional 
defined by
$$
	\varphi_{f,A}(x^*):=\int_A \langle x^*, f \rangle \, d\nu \quad\mbox{for all }x^*\in \Gamma.
$$
The following statements hold:
\begin{enumerate}
\item[(i)] $f$ is $\Gamma$-integrable if and only if $\varphi_{f,A}$ is $w^*$-continuous for every $A\in \Sigma$.
\item[(ii)] Let $D \sub \Gamma$. If $Z_{f,D}$ is uniformly integrable, then: 
\begin{itemize}
\item[(ii.1)] $\varphi_{f,A}$ is $w^*$-sequentially continuous on~$D$ for every $A\in \Sigma$;
\item[(ii.2)] $Z_{f,S_{\omega_1}(D)}$ is a uniformly integrable subset of~$L_1(\nu)$.
\end{itemize}
\end{enumerate}
\end{lem}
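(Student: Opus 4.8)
The plan is to treat (i) by a standard duality argument and (ii) by combining the Vitali convergence theorem with a transfinite induction, the latter using only Fatou's lemma to propagate uniform integrability through $w^*$-sequential limits.

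For (i), the point is that since $\Gamma$ is total over~$X$, the canonical evaluation map identifies $X$ with a (separating) linear subspace of the algebraic dual of~$\Gamma$, so that $\langle X,\Gamma\rangle$ is a dual pair and the topological dual of $(\Gamma,w^*)$ — here $w^*$ denotes $\sigma(X^*,X)|_\Gamma=\sigma(\Gamma,X)$ — is exactly~$X$. Hence a linear functional $\vf\colon\Gamma\to\mathbb{R}$ is $w^*$-continuous if and only if there is $x_\vf\in X$ with $\vf(x^*)=\langle x^*,x_\vf\rangle$ for every $x^*\in\Gamma$. Applying this to $\vf_{f,A}$ (which is well defined and linear because $f$ is $\Gamma$-scalarly integrable), its $w^*$-continuity amounts to the existence of an element $x_A\in X$ with $\langle x^*,x_A\rangle=\int_A\langle x^*,f\rangle\,d\nu$ for all $x^*\in\Gamma$, which is precisely the requirement that $\int_A f\,d\nu$ exist. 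So $f$ is $\Gamma$-integrable if and only if every $\vf_{f,A}$ is $w^*$-continuous.

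For (ii.1), fix $A\in\Sigma$ and a sequence $(x_n^*)$ in~$D$ which is $w^*$-convergent to some $x^*\in D$. Evaluating $w^*$-convergence at $x=f(\omega)$ shows $\langle x_n^*,f\rangle\to\langle x^*,f\rangle$ pointwise on~$\Omega$. Since $\{\langle x_n^*,f\rangle:n\in\Nat\}\sub Z_{f,D}$ is uniformly integrable, the Vitali convergence theorem yields $\langle x_n^*,f\rangle\to\langle x^*,f\rangle$ in $L_1(\nu)$, hence $\vf_{f,A}(x_n^*)\to\vf_{f,A}(x^*)$; this is exactly $w^*$-sequential continuity of $\vf_{f,A}$ on~$D$.

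For (ii.2), fix a bound $M$ and a modulus $\rho\colon(0,\infty)\to(0,\infty)$ witnessing the uniform integrability of $Z_{f,D}$. I will prove by transfinite induction on $\alpha\le\omega_1$ that $Z_{f,S_\alpha(D)}\sub L_1(\nu)$ and that the same pair $(M,\rho)$ witnesses its uniform integrability; evaluating at $\alpha=\omega_1$ then gives the claim. The case $\alpha=0$ is the hypothesis (together with $D\sub\Gamma$ and $\Gamma$-scalar integrability of~$f$). For a limit ordinal the statement passes to the union $Z_{f,S_\alpha(D)}=\bigcup_{\beta<\alpha}Z_{f,S_\beta(D)}$. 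For a successor $\alpha=\beta+1$, given $x^*\in S_\alpha(D)$ pick a sequence $(x_n^*)$ in $S_\beta(D)$ with $x_n^*\to x^*$ in~$w^*$; then $\langle x_n^*,f\rangle\to\langle x^*,f\rangle$ pointwise on~$\Omega$, so $\langle x^*,f\rangle$ is measurable, and Fatou's lemma gives $\int_A|\langle x^*,f\rangle|\,d\nu\le\liminf_n\int_A|\langle x_n^*,f\rangle|\,d\nu$ for every $A\in\Sigma$, whence $\|\langle x^*,f\rangle\|_1\le M$ and $\int_A|\langle x^*,f\rangle|\,d\nu\le\epsilon$ whenever $\nu(A)\le\rho(\epsilon)$. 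The only delicate point is the bookkeeping here: one must carry the measurability (hence integrability) of $\langle x^*,f\rangle$ along the induction, since a priori $x^*$ need not lie in~$\Gamma$, and one must verify that the \emph{same} modulus $\rho$ is preserved under $w^*$-sequential limits — which is exactly what Fatou's lemma provides, so no genuine obstacle arises.
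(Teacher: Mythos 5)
Your proof is correct and follows essentially the same route as the paper: part (i) via the identification of the dual of $(\Gamma,\sigma(\Gamma,X))$ with $X$, part (ii.1) via Vitali's convergence theorem, and part (ii.2) by the same transfinite induction preserving the same bound and modulus. The only (harmless) difference is that in the successor step of (ii.2) you propagate the estimates with Fatou's lemma, whereas the paper reuses Vitali's theorem; both yield $(p_\alpha)$ and $(q_\alpha)$ with the original constants.
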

\begin{proof} (i) The ``only if'' part is obvious, while the ``if'' part follows from the fact that any
$w^*$-continuous linear functional $\varphi:\Gamma \to \mathbb{R}$ is induced by some $x\in X$
via the formula $\varphi(x^*)=x^*(x)$ for all $x^*\in \Gamma$ (see e.g. \cite[\S10.4]{kot}).

(ii.1) Let $(x_n^*)_{n\in \Nat}$ be a sequence in~$D$
which $w^*$-converges to some~$x^*\in D$. Then $(\langle x_n^*,f\rangle)_{n\in \Nat}$ is uniformly integrable
and $\lim_{n\to \infty}\langle x_n^*,f\rangle=\langle x^*,f\rangle$ pointwise on~$\Omega$. By 
Vitali's convergence theorem, we have $\lim_{n\to \infty}\|\langle x_n^*,f \rangle-\langle x^*,f\rangle\|_{L_1(\nu)}=0$.
Hence, $\lim_{n\to \infty}\varphi_{f,A}(x_n^*)=\varphi_{f,A}(x^*)$ for every $A\in \Sigma$.

(ii.2) Fix $c>0$ such that $Z_{f,D}\sub cB_{L_1(\nu)}$ and
a function $\delta: (0,\infty)\to (0,\infty)$ such that
$$
	\sup_{\substack{A\in \Sigma\\\nu(A)\leq \delta(\epsilon)}} \sup_{x^*\in D}
	\int_A |\langle x^*,f \rangle| \, d\nu\leq \epsilon
	\quad \mbox{for every }\epsilon>0.
$$
We will prove, by transfinite induction, that for each $\alpha\leq \omega_1$ we have
\begin{eqnarray*}
	(p_\alpha) &\quad \qquad& Z_{f,S_\alpha(D)} \sub c B_{L_1(\nu)} \qquad \mbox{ and}\\
	(q_\alpha) &\quad \qquad& \sup_{\substack{A\in \Sigma\\\nu(A)\leq \delta(\epsilon)}} \sup_{x^*\in S_\alpha(D)}
	\int_A |\langle x^*,f \rangle| \, d\nu\leq \epsilon
	\quad \mbox{for every }\epsilon>0.
\end{eqnarray*}
The case $\alpha=0$ is obvious. Suppose now that $0<\alpha\leq \omega_1$ and that $(p_\beta)$
and~$(q_\beta)$ hold true for every $\beta<\alpha$. If $\alpha$ is a limit,
then $S_\alpha(D)=\bigcup_{\beta<\alpha}S_\beta(D)$ and so $(p_\alpha)$
and~$(q_\alpha)$ also hold. Suppose on the contrary that $\alpha=\beta+1$.
Fix an arbitrary $x^*\in S_\alpha(D)=S_1(S_\beta(D))$. Then
there is a sequence $(x_n^*)_{n\in \Nat}$ in~$S_\beta(D)$ which $w^*$-converges to~$x^*$,
hence $\lim_{n\to \infty}\langle x_n^*,f \rangle=\langle x^*,f\rangle$ pointwise on~$\Omega$.
Since $Z_{f,S_\beta(D)}$ is uniformly integrable
(by $(p_\beta)$ and~$(q_\beta)$), we can apply Vitali's convergence theorem 
to conclude that $\langle x^*,f\rangle\in L_1(\nu)$ and $\lim_{n\to \infty}\|\langle x_n^*,f \rangle-\langle x^*,f\rangle\|_{L_1(\nu)}=0$.
Clearly, this shows that $(p_\alpha)$ and~$(q_\alpha)$ hold. 
\end{proof}

\begin{theo}\label{theo:ScalarGamma}
Let $f:\Omega \to X$ be a $\Gamma$-scalarly integrable function such that $Z_{f,\Gamma \cap B_{X^*}}$ 
is uniformly integrable.
\begin{enumerate}
\item[(i)] If $(\Gamma,w^*)$ has the Mazur property, then $f$ is $\Gamma$-integrable. 
\item[(ii)] If $X$ has property~($\mathcal{E}'$) and $\Gamma$ is norming, then $f$ is Pettis integrable.
\end{enumerate}
\end{theo}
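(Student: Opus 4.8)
The plan is to deduce both parts from Lemma~\ref{lem:ScalarGamma}; the key point in each case is to upgrade the $w^*$-sequential continuity supplied by that lemma into genuine $w^*$-continuity --- using the Mazur property in~(i) and property~($\mathcal{E}'$) in~(ii).

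For part~(i), I would first use Lemma~\ref{lem:ScalarGamma}(i) to reduce the problem to showing that $\varphi_{f,A}\colon\Gamma\to\mathbb{R}$ is $w^*$-continuous for every $A\in\Sigma$; since $(\Gamma,w^*)$ has the Mazur property it then suffices to check that $\varphi_{f,A}$ is $w^*$-sequentially continuous on~$\Gamma$. So let $(x^*_n)_n$ be a $w^*$-convergent sequence in~$\Gamma$ with limit $x^*\in\Gamma$. By the uniform boundedness principle there is $m\in\Nat$ such that $\{x^*_n\colon n\in\Nat\}\cup\{x^*\}\sub m(\Gamma\cap B_{X^*})$, and since $Z_{f,m(\Gamma\cap B_{X^*})}=m\,Z_{f,\Gamma\cap B_{X^*}}$ is uniformly integrable, Lemma~\ref{lem:ScalarGamma}(ii.1) applied to $D=m(\Gamma\cap B_{X^*})$ gives $\varphi_{f,A}(x^*_n)\to\varphi_{f,A}(x^*)$, as wanted.

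For part~(ii), I would argue as follows. First, property~($\mathcal{E}'$) implies property~($\cD'$), hence property~($\cD$), so $f$ is scalarly measurable. For $n\in\Nat$ put $D_n:=n(\Gamma\cap B_{X^*})$, a convex bounded subset of~$X^*$; then $Z_{f,D_n}=n\,Z_{f,\Gamma\cap B_{X^*}}$ is uniformly integrable, so $Z_{f,S_{\omega_1}(D_n)}$ is uniformly integrable by Lemma~\ref{lem:ScalarGamma}(ii.2), and by the characterization of property~($\mathcal{E}'$) recalled above $S_{\omega_1}(D_n)=\overline{D_n}^{\,w^*}=n\,\overline{\Gamma\cap B_{X^*}}^{\,w^*}$. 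As $\Gamma$ is norming, the bipolar theorem shows $\overline{\Gamma\cap B_{X^*}}^{\,w^*}\supseteq aB_{X^*}$ for some $a>0$, whence $\bigcup_{n}S_{\omega_1}(D_n)=X^*$. It follows that $\langle x^*,f\rangle\in L_1(\nu)$ for every $x^*\in X^*$, so for each $A\in\Sigma$ the formula $\psi_{f,A}(x^*):=\int_A\langle x^*,f\rangle\,d\nu$ defines a linear functional on~$X^*$, which by the $L_1$-boundedness in Lemma~\ref{lem:ScalarGamma}(ii.2) is bounded on $\overline{\Gamma\cap B_{X^*}}^{\,w^*}\supseteq aB_{X^*}$, hence belongs to~$X^{**}$.

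The crucial (and hardest) step is then to show that $\psi_{f,A}$ in fact lies in~$X$. Running the Vitali-theorem argument of Lemma~\ref{lem:ScalarGamma}(ii.1) along sequences contained in $B_{X^*}$ --- legitimate because $B_{X^*}\sub m\,\overline{\Gamma\cap B_{X^*}}^{\,w^*}=S_{\omega_1}(D_m)$ for a suitable $m\in\Nat$, so such sequences stay inside the uniformly integrable family $Z_{f,S_{\omega_1}(D_m)}$ --- shows that $\psi_{f,A}|_{B_{X^*}}$ is $w^*$-sequentially continuous. Hence, for every $t\in\mathbb{R}$, the sets $\{x^*\in B_{X^*}\colon\psi_{f,A}(x^*)\le t\}$ and $\{x^*\in B_{X^*}\colon\psi_{f,A}(x^*)\ge t\}$ are convex, bounded and $w^*$-sequentially closed, so they are $w^*$-closed by property~($\mathcal{E}'$); this makes $\psi_{f,A}|_{B_{X^*}}$ both upper and lower $w^*$-semicontinuous, i.e. $w^*$-continuous, and therefore $\psi_{f,A}$ is $w^*$-continuous on~$X^*$ by the Banach-Dieudonn\'{e} theorem, i.e. $\psi_{f,A}\in X$. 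Defining $\int_A f\,d\nu:=\psi_{f,A}$ then exhibits $f$ as Pettis integrable. I expect this last passage --- from sequential to topological behaviour of $\psi_{f,A}$ on the ball --- to be the main obstacle; the device that resolves it is to feed the (automatically convex and bounded) sub- and super-level sets of $\psi_{f,A}|_{B_{X^*}}$ into property~($\mathcal{E}'$), converting the $w^*$-sequential closedness that Vitali's theorem yields into $w^*$-closedness, and then to recall that closedness of all level sets is precisely two-sided semicontinuity.
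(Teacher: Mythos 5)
Both parts of your proposal are correct. Part~(i) is exactly the paper's argument made explicit: a $w^*$-convergent sequence in $\Gamma$ is norm-bounded, so after rescaling it lies in a set to which Lemma~\ref{lem:ScalarGamma}(ii.1) applies, and the Mazur property together with Lemma~\ref{lem:ScalarGamma}(i) finishes. In part~(ii) you follow the paper up to the point where $f$ is known to be scalarly integrable with $Z_{f,B_{X^*}}$ uniformly integrable (norming plus Hahn--Banach to get $\overline{\Gamma\cap B_{X^*}}^{w^*}\supseteq cB_{X^*}$, property~($\mathcal{E}'$) to identify this with $S_{\omega_1}(\Gamma\cap B_{X^*})$, then Lemma~\ref{lem:ScalarGamma}(ii.2)); after that you diverge. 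The paper simply notes that property~($\mathcal{E}'$) implies property~($\cD'$), hence that $(X^*,w^*)$ has the Mazur property, and invokes part~(i) with $\Gamma=X^*$. You instead establish the $w^*$-continuity of $\psi_{f,A}$ by hand: Vitali gives $w^*$-sequential continuity on~$B_{X^*}$, the convex bounded $w^*$-sequentially closed sub- and super-level sets of $\psi_{f,A}|_{B_{X^*}}$ are fed into property~($\mathcal{E}'$) to get two-sided semicontinuity on the ball, and the Banach--Dieudonn\'{e} theorem upgrades this to $w^*$-continuity on~$X^*$. Your route is valid and has the mild virtue of using only~($\mathcal{E}'$) directly, but it is longer than necessary: once $\psi_{f,A}$ is $w^*$-sequentially continuous on all of~$X^*$ (any $w^*$-convergent sequence being bounded), its kernel is a $w^*$-sequentially closed subspace, and property~($\cD'$) --- which is precisely what ($\mathcal{E}'$) yields via Banach--Dieudonn\'{e} --- gives $w^*$-closedness of the kernel, i.e. $w^*$-continuity, in one line; this is the content of the implication ``($\cD'$) $\impli$ Mazur'' that the paper exploits through part~(i).
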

\begin{proof}
(i) follows at once from Lemma~\ref{lem:ScalarGamma}.

(ii) The fact that $\Gamma$ is norming is equivalent to saying that $\overline{\Gamma \cap B_{X^*}}^{w^*}
\supseteq c B_{X^*}$ for some $c>0$ (by the Hahn-Banach separation theorem). Since $X$ has property~($\mathcal{E}'$), we have 
$S_{\omega_1}(\Gamma \cap B_{X^*}) =\overline{\Gamma \cap B_{X^*}}^{w^*}\supseteq cB_{X^*}$.
An appeal to Lemma~\ref{lem:ScalarGamma}(ii.2) ensures that $f$ is scalarly integrable and $Z_{f,B_{X^*}}$ is
uniformly integrable. Since $(X^*,w^*)$ has the Mazur property (by property~($\cD'$) of~$X$),
statement~(i) (applied to~$\Gamma=X^*$) implies that $f$ is Pettis integrable. 
\end{proof}

\begin{rem}\label{rem:CascalesRodriguez}
Theorem~\ref{theo:ScalarGamma}(ii) generalizes an earlier analogous result for spaces with $w^*$-angelic dual, 
see \cite[pp.~551--552]{cas-rod-2}.
\end{rem}

The Mackey topology $\mu(X,\Gamma)$ is defined as the (locally convex Hausdorff) topology on~$X$ of uniform convergence on absolutely convex $w^*$-compact subsets of~$\Gamma$.  
When $\Gamma$ is norm-closed, $(X,\mu(X,\Gamma))$ is complete if (and only if) it is quasi-complete (see e.g. \cite{avi-gui-mar-rod}).
This completeness assumption was used by Kunze~\cite{kun} to find conditions ensuring the $\Gamma$-integrability
of a $\Gamma$-scalarly integrable function provided that $\Gamma$ is norming and norm-closed. After Kunze's work, the completeness of $(X,\mu(X,\Gamma))$
has been discussed in \cite{avi-gui-mar-rod,bon-cas,gui-mon,gui-mon-ziz}. For instance, $(X,\mu(X,\Gamma))$ is complete whenever $(X^*,w^*)$ is angelic
and $\Gamma$ is norming and norm-closed (see \cite[Theorem~4]{gui-mon-ziz}). There is also a connection between the completeness
of $(X,\mu(X,\Gamma))$ and the Mazur property of~$(\Gamma,w^*)$, see \cite{gui-mon-ziz}. 

The following result is a refinement of \cite[Theorem~4.4]{kun}. We denote by 
$\sigma(X,\Gamma)$ the (locally convex Hausdorff) topology on~$X$ of pointwise convergence on~$\Gamma$.

\begin{theo}\label{theo:Kunze}
Suppose $(X,\mu(X,\Gamma))$ is complete. Let $f:\Omega \to X$ be a $\Gamma$-scalarly integrable function such that 
$Z_{f,\Gamma\cap B_{X^*}}$ is uniformly integrable and there is a 
$\sigma(X,\Gamma)$-separable linear subspace $X_0 \sub X$ such that $f(\omega)\in X_0$ for $\mu$-a.e. $\omega \in \Omega$. 
Then $f$ is $\Gamma$-integrable.
\end{theo}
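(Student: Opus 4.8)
The strategy is to use the hypotheses to reduce the problem to a situation where $w^*$-compact absolutely convex subsets of $\Gamma$ witness the integrability, and then invoke completeness of $(X,\mu(X,\Gamma))$ to produce the integral. First I would fix $A\in \Sigma$ and recall from Lemma~\ref{lem:ScalarGamma}(i) that it suffices to show the linear functional $\varphi_{f,A}: \Gamma \to \mathbb{R}$ is $w^*$-continuous. Since $(X,\mu(X,\Gamma))$ is complete and $\Gamma$ is its dual, $\varphi_{f,A}$ will be induced by an element of~$X$ as soon as it is continuous on every absolutely convex $w^*$-compact subset $K \sub \Gamma$; by the Banach--Dieudonn\'{e}-type argument behind completeness of the Mackey topology, $w^*$-continuity on such $K$ (equivalently, $\varphi_{f,A}$ being $\mu(X,\Gamma)$-continuous as a functional, hence $\sigma(X,\Gamma)$-representable) is what we need. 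So the real content is: for each absolutely convex $w^*$-compact $K\sub\Gamma$, $\varphi_{f,A}|_K$ is $w^*$-continuous.

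Next I would bring in the uniform integrability hypothesis. After rescaling we may assume $K \sub B_{X^*}$, so that $Z_{f,K} \sub Z_{f,\Gamma\cap B_{X^*}}$ is uniformly integrable. By Lemma~\ref{lem:ScalarGamma}(ii.1), $\varphi_{f,A}$ is $w^*$-\emph{sequentially} continuous on~$K$. To upgrade sequential continuity to continuity on the $w^*$-compact set~$K$, I would use the separability hypothesis: there is a $\sigma(X,\Gamma)$-separable linear subspace $X_0\sub X$ with $f(\omega)\in X_0$ for $\mu$-a.e.~$\omega$. Let $\{x_k : k\in\Nat\}$ be $\sigma(X,\Gamma)$-dense in~$X_0$. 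Then on $X_0$ (hence, up to the null set, for the purpose of computing $\varphi_{f,A}$) the $w^*$-topology of $\Gamma$ is determined by the countably many evaluations $x^* \mapsto \langle x^*, x_k\rangle$; more precisely, the restriction to~$K$ of the map $x^*\mapsto \langle x^*, f(\cdot)\rangle$ factors through the metrizable quotient of $(K,w^*)$ induced by $\{x_k\}$. Concretely, $(K,w^*)$ admits a continuous injection into the metrizable space $\prod_k [-1,1]$ via $x^*\mapsto (\langle x^*,x_k\rangle)_k$ after identifying points of $K$ that agree on~$X_0$; since $K$ is $w^*$-compact this quotient map is a homeomorphism onto its (metrizable) image, and $\varphi_{f,A}$ is constant on its fibres (because $\langle x^*, f(\omega)\rangle$ depends only on $x^*|_{X_0}$ for a.e.~$\omega$). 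On a metrizable compact space sequential continuity coincides with continuity, so the sequential continuity from Lemma~\ref{lem:ScalarGamma}(ii.1) gives $w^*$-continuity of $\varphi_{f,A}|_K$.

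Having shown $\varphi_{f,A}$ is continuous on every absolutely convex $w^*$-compact $K\sub\Gamma$, I would conclude that $\varphi_{f,A}$ is $\mu(X,\Gamma)$-continuous on~$\Gamma$; by completeness of $(X,\mu(X,\Gamma))$ (so that $X$ is exactly the dual of $(\Gamma,\mu(\Gamma,X))$ in the relevant sense, or directly: $\varphi_{f,A}$ equicontinuous on bounded-type sets forces representation by a point of the completion, which is~$X$ itself) there is $x_A\in X$ with $\langle x^*, x_A\rangle = \varphi_{f,A}(x^*) = \int_A\langle x^*,f\rangle\,d\nu$ for all $x^*\in\Gamma$. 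Setting $\int_A f\,d\nu := x_A$ shows $f$ is $\Gamma$-integrable. The main obstacle is the middle step: carefully justifying that the separability hypothesis lets one replace the (generally non-metrizable) $w^*$-compact set~$K$ by a metrizable image on which Lemma~\ref{lem:ScalarGamma}(ii.1) can be applied — in particular handling the null set off which $f$ takes values in~$X_0$, and checking that passing to the quotient by $X_0^{\perp}\cap K$ is legitimate because $\varphi_{f,A}$ genuinely only sees $x^*|_{X_0}$. One should also double-check the precise form of the completeness hypothesis needed to pass from "$\varphi_{f,A}$ continuous on each absolutely convex $w^*$-compact subset" to "$\varphi_{f,A}$ represented by a point of~$X$"; this is exactly the point where Kunze's argument and the quoted facts on $\mu(X,\Gamma)$ are used.
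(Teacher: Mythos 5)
Your proposal is correct and follows essentially the same route as the paper: reduce, via the Grothendieck completeness criterion for $(X,\mu(X,\Gamma))$, to showing that $\varphi_{f,A}$ is $w^*$-continuous on each absolutely convex $w^*$-compact $K\sub\Gamma$, and obtain this by combining uniform integrability (Vitali) with the metrizability supplied by the $\sigma(X,\Gamma)$-separability hypothesis. The paper implements your ``quotient'' as the restriction operator $r:X^*\to X_1^*$ onto $X_1:=\overline{X_0}^{\sigma(X,\Gamma)}$ and, crucially, applies Lemma~\ref{lem:ScalarGamma}(ii.1) to $f$ viewed as an $X_1$-valued function with $D=r(K)$ (rather than with $D=K$), which cleanly handles the point you yourself flagged, namely that sequential continuity on $K$ does not automatically descend to the metrizable quotient because convergent sequences there need not lift to convergent sequences in~$K$.
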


To deal with Theorem~\ref{theo:Kunze} we need the following folk lemma:

\begin{lem}\label{lem:Separable}
If $X$ is $\sigma(X,\Gamma)$-separable, then every $w^*$-compact subset of~$\Gamma$ is $w^*$-metrizable.
\end{lem}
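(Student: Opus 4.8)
The plan is to exploit the fact that a $\sigma(X,\Gamma)$-separable space admits a countable set $\{x_n : n\in\Nat\}\sub X$ that is $\sigma(X,\Gamma)$-dense, and to show that on any $w^*$-compact $K\sub\Gamma$ the countably many functionals $x^*\mapsto\langle x^*,x_n\rangle$ already separate the points of~$K$. Indeed, if $x^*,y^*\in K$ satisfy $\langle x^*,x_n\rangle=\langle y^*,x_n\rangle$ for all $n$, then by $\sigma(X,\Gamma)$-density of $\{x_n\}$ in~$X$ we get $\langle x^*,x\rangle=\langle y^*,x\rangle$ for every $x\in X$, i.e. $x^*=y^*$. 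Thus the map
$$
	e:K\to \mathbb{R}^{\Nat},\qquad e(x^*):=\big(\langle x^*,x_n\rangle\big)_{n\in\Nat},
$$
is injective.

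Next I would check that $e$ is continuous from $(K,w^*)$ to $\mathbb{R}^{\Nat}$ with the product topology: each coordinate $x^*\mapsto\langle x^*,x_n\rangle$ is $w^*$-continuous by the very definition of the weak$^*$ topology, so $e$ is continuous. Since $(K,w^*)$ is compact and $\mathbb{R}^{\Nat}$ is Hausdorff, $e$ is a homeomorphism onto its image $e(K)\sub\mathbb{R}^{\Nat}$. Finally, $\mathbb{R}^{\Nat}$ is metrizable (it is a countable product of metrizable spaces), hence so is the subspace $e(K)$, and therefore $(K,w^*)$ is metrizable. This proves the lemma.

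The only point that requires a word of care is the passage from "agrees on each $x_n$" to "agrees on all of~$X$'': one must use that the functionals involved, namely $x^*$ and $y^*$ viewed as elements of the algebraic dual acting through the pairing $\langle\,\cdot\,,\,\cdot\,\rangle$, are $\sigma(X,\Gamma)$-continuous on~$X$, so that the set $\{x\in X:\langle x^*,x\rangle=\langle y^*,x\rangle\}$ is $\sigma(X,\Gamma)$-closed and contains the dense set $\{x_n\}$, hence equals~$X$. This is automatic because elements of $\Gamma\sub X^*$ are by definition $\sigma(X,\Gamma)$-continuous. I do not expect any genuine obstacle here; the statement is indeed a folklore fact, and the proof is essentially the standard argument that a compact space with a countable separating family of continuous real-valued functions is metrizable.
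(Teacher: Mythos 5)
Your proof is correct and follows essentially the same route as the paper: both embed the compact set into $\mathbb{R}^{\Nat}$ via evaluation at a countable $\sigma(X,\Gamma)$-dense sequence, using the $\sigma(X,\Gamma)$-continuity of elements of~$\Gamma$ to get injectivity and the compact-to-Hausdorff argument to get a homeomorphism onto a metrizable subspace. Your explicit justification of the injectivity step is a welcome elaboration of what the paper leaves implicit.
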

\begin{proof} Let $(x_n)_{n\in \Nat}$ be a $\sigma(X,\Gamma)$-dense sequence in~$X$. The map
$$
	\Gamma \ni x^* \mapsto (x^*(x_n))_{n\in \Nat} \in \mathbb{R}^\mathbb{N}
$$  
is ($w^*$-pointwise)-continuous and injective. Hence any $w^*$-compact subset of~$\Gamma$ is homeomorphic to a (compact) 
subset of the metrizable topological space $\mathbb{R}^\mathbb{N}$. 
\end{proof}

\begin{proof}[Proof of Theorem~\ref{theo:Kunze}] 
We will prove that $f$ is $\Gamma$-integrable by checking that $\varphi_{f,A}$ is $w^*$-continuous
for every $A\in \Sigma$ (see Lemma~\ref{lem:ScalarGamma}(i)).

Note that $X_1:=\overline{X_0}^{\sigma(X,\Gamma)}$ is a subspace of~$X$.
We denote by $r:X^*\to X_1^*$ the restriction operator, i.e. $r(x^*):=x^*|_{X_1}$ for all $x^*\in X^*$. Then
$$
	\Gamma_1:=r(\Gamma)=\{x^*|_{X_1}: \, x^*\in \Gamma\}\sub X_1^*
$$ 
is a total linear subspace (over~$X_1$). Of course, we can assume without loss of generality
that $f(\Omega)\sub X_1$, so $f$ can be seen as an $X_1$-valued $\Gamma_1$-scalarly integrable function.

Fix $A\in \Sigma$. Since $(X,\mu(X,\Gamma))$ is complete, in order to check that $\varphi:=\varphi_{f,A}$ is $w^*$-continuous it suffices 
to show that the restriction $\varphi|_K$ is $w^*$-continuous for each absolutely convex $w^*$-compact set~$K \sub \Gamma$
(see e.g. \cite[\S21.9]{kot}). Since $r$ is ($w^*$-$w^*$)-continuous, $r(K)$ is a $w^*$-compact
subset of~$\Gamma_1$. Note that $\varphi|_{K}$ factors as
$$
	\xymatrix{K\ar_{r|_K}[d]\ar[rr]^{\varphi|_K}&&\mathbb{R}\\
	r(K)\ar_{\psi}[urr]&& }
$$
where $\psi(y^*):=\int_A \langle y^*,f \rangle \, d\nu$ for all $y^*\in r(K)$.
Observe that $Z_{f,K}$ is uniformly integrable
(because $K$ is a bounded subset of~$\Gamma$ and $Z_{f,\Gamma \cap B_{X^*}}$
is uniformly integrable). From Lemma~\ref{lem:ScalarGamma}(ii.1) (applied to~$f$ as an $X_1$-valued $\Gamma_1$-scalarly
integrable function and $D=r(K)$) it follows that $\psi$ is $w^*$-sequentially continuous on~$r(K)$.
Since $r(K)$ is $w^*$-metrizable (bear in mind the $\sigma(X_1,\Gamma_1)$-separability of~$X_1$ and Lemma~\ref{lem:Separable}),
we conclude that $\psi$ is $w^*$-continuous, and so is~$\varphi|_K$.
The proof is finished.
\end{proof}

\section{Integration and operators}\label{section:operators}

Throughout this section $Y$ is a Banach space. 
Any operator $T:X\to Y$ is ($\Ba(X,w)$-$\Ba(Y,w)$)-measurable, in the sense that
$T^{-1}(B) \in \Ba(X,w)$ for every $B\in \Ba(Y,w)$.
In fact, we have the following lemma, whose proof
is included for the sake of completeness.

\begin{lem}\label{lem:sigma-adjoint-total}
Let $T:X \to Y$ be an operator. Then 
$$
	\sigma(T^*(Y^*))=\big\{T^{-1}(B): \, B\in \Ba(Y,w)\big\}.
$$
\end{lem}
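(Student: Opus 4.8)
The plan is to prove the set equality $\sigma(T^*(Y^*))=\{T^{-1}(B):B\in\Ba(Y,w)\}$ by establishing both inclusions, using Edgar's characterization $\Ba(Y,w)=\sigma(Y^*)$.

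First I would check that $\mathcal{A}:=\{T^{-1}(B):B\in\Ba(Y,w)\}$ is a $\sigma$-algebra on $X$; this is routine since preimages commute with the Boolean operations. For the inclusion $\sigma(T^*(Y^*))\sub\mathcal{A}$, it suffices to show that each $x^*\in T^*(Y^*)$ is $\mathcal{A}$-measurable. Write $x^*=T^*(y^*)=y^*\circ T$ for some $y^*\in Y^*$. For any Borel set $E\sub\mathbb{R}$ we have $(x^*)^{-1}(E)=T^{-1}((y^*)^{-1}(E))$, and $(y^*)^{-1}(E)\in\sigma(Y^*)=\Ba(Y,w)$ because $y^*$ is $\Ba(Y,w)$-measurable by Edgar's theorem. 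Hence $(x^*)^{-1}(E)\in\mathcal{A}$, so $x^*$ is $\mathcal{A}$-measurable, and since $\sigma(T^*(Y^*))$ is by definition the smallest $\sigma$-algebra making every element of $T^*(Y^*)$ measurable, we get $\sigma(T^*(Y^*))\sub\mathcal{A}$.

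For the reverse inclusion $\mathcal{A}\sub\sigma(T^*(Y^*))$, I would argue that $T$ is $(\sigma(T^*(Y^*))$-$\Ba(Y,w))$-measurable. Using $\Ba(Y,w)=\sigma(Y^*)$, it is enough to check that $T^{-1}((y^*)^{-1}(E))\in\sigma(T^*(Y^*))$ for every $y^*\in Y^*$ and every Borel $E\sub\mathbb{R}$; but $T^{-1}((y^*)^{-1}(E))=(y^*\circ T)^{-1}(E)=(T^*(y^*))^{-1}(E)$, which lies in $\sigma(T^*(Y^*))$ since $T^*(y^*)\in T^*(Y^*)$ is $\sigma(T^*(Y^*))$-measurable by definition. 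Thus for every $B\in\Ba(Y,w)$ we have $T^{-1}(B)\in\sigma(T^*(Y^*))$, i.e. $\mathcal{A}\sub\sigma(T^*(Y^*))$. Combining the two inclusions gives the claimed equality.

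There is no real obstacle here: the only non-elementary ingredient is Edgar's theorem $\Ba(Y,w)=\sigma(Y^*)$, which is quoted in the excerpt and reduces everything to the standard fact that preimages under the fixed map $T$ respect arbitrary $\sigma$-algebra operations. One could even streamline the write-up by noting the general principle that for any map $g:X\to Y$ between measurable spaces and any family $\mathcal{G}$ generating the $\sigma$-algebra on $Y$, one has $g^{-1}(\sigma(\mathcal{G}))=\sigma(g^{-1}(\mathcal{G}))$; applying this with $g=T$, $\mathcal{G}=\{(y^*)^{-1}(E):y^*\in Y^*,\,E\sub\mathbb{R}\text{ Borel}\}$, together with $\sigma(\mathcal{G})=\sigma(Y^*)=\Ba(Y,w)$ and $T^{-1}(\mathcal{G})$ generating $\sigma(T^*(Y^*))$, yields the result in one line.
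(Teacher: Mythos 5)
Your proof is correct and follows essentially the same route as the paper: both establish $\sigma(T^*(Y^*))\subseteq\mathcal{A}$ by checking each $T^*(y^*)=y^*\circ T$ is $\mathcal{A}$-measurable, and both obtain the reverse inclusion by a ``good sets'' argument on $\Ba(Y,w)=\sigma(Y^*)$ (the paper's auxiliary $\sigma$-algebra $\mathcal{B}$ is precisely the standard fact $T^{-1}(\sigma(\mathcal{G}))=\sigma(T^{-1}(\mathcal{G}))$ that you invoke). No gaps.
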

\begin{proof}
Note that $\mathcal{A}:=\{T^{-1}(B): \, B\in \Ba(Y,w)\}$ is a $\sigma$-algebra on~$X$.
Since $T^*(y^*)=y^*\circ T: X \to \mathbb{R}$ is $\mathcal{A}$-measurable for every $y^*\in Y^*$,
the inclusion $\sigma(T^*(Y^*)) \sub \mathcal{A}$ holds. On the other hand, define
$$
	\mathcal{B}:=\big\{B\in \Ba(Y,w): \, T^{-1}(B)\in \sigma(T^*(Y^*))\big\},
$$
which is clearly a $\sigma$-algebra on~$Y$. Since each $y^*\in Y^*$ is $\mathcal{B}$-measurable,
we have $\Ba(Y,w)=\mathcal{B}$ and so $\sigma(T^*(Y^*))=\mathcal{A}$.
\end{proof}

\begin{cor}\label{cor:characterizationWBE}
Let $T:X \to Y$ be an operator. The following statements are equivalent:
\begin{enumerate}
\item[(i)] $\Ba(X,w)=\{T^{-1}(B): \, B\in \Ba(Y,w)\}$.
\item[(ii)] For every measurable space $(\tilde{\Omega},\tilde{\Sigma})$ and every function $f:\tilde{\Omega} \to X$
we have: $f$ is scalarly measurable if (and only if) $T\circ f$ is scalarly measurable. 
\end{enumerate}
\end{cor}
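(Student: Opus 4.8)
The plan is to reduce both conditions to a single equality of $\sigma$-algebras and then exploit a universal test object.

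First I would rewrite condition~(i) using the two facts already at hand: by Edgar's theorem $\Ba(X,w)=\sigma(X^*)$, and by Lemma~\ref{lem:sigma-adjoint-total} $\{T^{-1}(B):B\in\Ba(Y,w)\}=\sigma(T^*(Y^*))$. Hence~(i) is equivalent to the equality $\sigma(X^*)=\sigma(T^*(Y^*))$ of $\sigma$-algebras on~$X$. Since $T^*(Y^*)\sub X^*$, the inclusion $\sigma(T^*(Y^*))\sub\sigma(X^*)$ always holds, so the real content of~(i) is the reverse inclusion $\sigma(X^*)\sub\sigma(T^*(Y^*))$. Next I would reformulate scalar measurability itself: for $f:\tilde{\Omega}\to X$, the statement that $\langle x^*,f\rangle$ is $\tilde{\Sigma}$-measurable for all $x^*\in X^*$ is exactly the statement that $f$ is $(\tilde{\Sigma}\text{-}\sigma(X^*))$-measurable, because $\sigma(X^*)$ is by definition the smallest $\sigma$-algebra making every $x^*$ measurable. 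Likewise, using $\langle y^*,T\circ f\rangle=\langle T^*(y^*),f\rangle$, the function $T\circ f$ is scalarly measurable if and only if $f$ is $(\tilde{\Sigma}\text{-}\sigma(T^*(Y^*)))$-measurable. Thus~(ii) becomes: for every measurable space $(\tilde{\Omega},\tilde{\Sigma})$ and every $f:\tilde{\Omega}\to X$, the $(\tilde{\Sigma}\text{-}\sigma(X^*))$-measurability of~$f$ is equivalent to its $(\tilde{\Sigma}\text{-}\sigma(T^*(Y^*)))$-measurability (one implication, the ``only if'', being automatic from $T^*(Y^*)\sub X^*$).

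With these reformulations, (i)$\impli$(ii) is immediate: if the two $\sigma$-algebras coincide then the two measurability notions coincide for every $f$ on every $(\tilde{\Omega},\tilde{\Sigma})$. For (ii)$\impli$(i) I would apply the hypothesis to the single test object $(\tilde{\Omega},\tilde{\Sigma}):=(X,\sigma(T^*(Y^*)))$ together with $f:=\mathrm{id}_X$. For this choice $\langle y^*,T\circ f\rangle=y^*\circ T=T^*(y^*)$ is $\sigma(T^*(Y^*))$-measurable by the very definition of $\sigma(T^*(Y^*))$, so $T\circ f$ is scalarly measurable; (ii) then forces $f=\mathrm{id}_X$ to be scalarly measurable, i.e.\ every $x^*\in X^*$ is $\sigma(T^*(Y^*))$-measurable, which says precisely $\sigma(X^*)\sub\sigma(T^*(Y^*))$. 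Together with the automatic reverse inclusion this yields~(i).

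I do not expect a genuine obstacle: the argument is an unwinding of definitions plus one standard idea. The only step requiring a moment's thought is (ii)$\impli$(i), where the hypothesis quantifies over all measurable spaces and one must notice that evaluating it on the canonical space $(X,\sigma(T^*(Y^*)))$ with the identity map already extracts the full force of~(i) --- the familiar principle that the identity map is the ``universal'' measurable function. The ``only if'' halves of the two directions, and the parenthetical ``only if'' inside~(ii), are all trivial consequences of $T^*(Y^*)\sub X^*$.
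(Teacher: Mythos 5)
Your proof is correct, and it fills in exactly the argument the paper leaves implicit: the corollary is stated without proof as an immediate consequence of Lemma~\ref{lem:sigma-adjoint-total} together with Edgar's identification $\Ba(X,w)=\sigma(X^*)$, which is precisely your reduction to $\sigma(X^*)=\sigma(T^*(Y^*))$. The identity-map test on $(X,\sigma(T^*(Y^*)))$ for (ii)$\impli$(i) is the standard way to extract the $\sigma$-algebra equality from the quantified statement, so your route matches the intended one.
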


An injective operator satisfying the statements of Corollary~\ref{cor:characterizationWBE}
is said to be  a {\em weak Baire embedding}. This concept was considered by Andrews~\cite[p.~152]{and} in the particular case of adjoint operators.
Plainly, {\em if $T:X \to Y$ is an injective operator, then $T^*(Y^*)$ is a total linear subspace of~$X^*$.}
Therefore:

\begin{cor}\label{cor:Dtilde}
If $X$ has property~($\cD$), then every injective operator $T:X \to Y$ is a weak Baire embedding.
\end{cor}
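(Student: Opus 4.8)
The plan is simply to splice together the two preparatory results already in hand. By Lemma~\ref{lem:sigma-adjoint-total}, the pullback $\sigma$-algebra $\{T^{-1}(B):B\in\Ba(Y,w)\}$ coincides with $\sigma(T^*(Y^*))$; and by Corollary~\ref{cor:characterizationWBE} an injective operator $T$ is a weak Baire embedding exactly when that pullback $\sigma$-algebra equals $\Ba(X,w)$. So the whole matter reduces to verifying $\Ba(X,w)=\sigma(T^*(Y^*))$.

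First I would record the routine observation stated just before the corollary: since $T$ is injective, $T^*(Y^*)$ is total over~$X$. Indeed, given $x\in X\setminus\{0\}$ we have $Tx\neq 0$, so there is $y^*\in Y^*$ with $\langle y^*,Tx\rangle\neq 0$, i.e. $\langle T^*y^*,x\rangle\neq 0$. Thus $\Gamma:=T^*(Y^*)$ is a total linear subspace of~$X^*$.

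Now property~($\cD$) of~$X$, applied to this total set~$\Gamma$, gives $\Ba(X,w)=\sigma(\Gamma)=\sigma(T^*(Y^*))$ directly from Definition~\ref{defi:D}. Combining this equality with Lemma~\ref{lem:sigma-adjoint-total} yields $\Ba(X,w)=\{T^{-1}(B):B\in\Ba(Y,w)\}$, which is precisely condition~(i) of Corollary~\ref{cor:characterizationWBE}. Hence $T$ is a weak Baire embedding, as claimed. There is no genuine obstacle here: the statement is an immediate consequence of the preceding lemma and corollary once one notes that property~($\cD$) applies to the total subspace~$T^*(Y^*)$.
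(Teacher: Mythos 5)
Your argument is correct and is exactly the proof the paper intends (the corollary is stated immediately after the observation that $T^*(Y^*)$ is total for injective~$T$, with Lemma~\ref{lem:sigma-adjoint-total} and Corollary~\ref{cor:characterizationWBE} doing the rest). Nothing to add.
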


As to strong measurability, in \cite[Proposition 4.3(ii)]{dev-rod} it was shown that if $X$ is WLD and $T:X\to Y$ is an injective operator, then a function
$f:\Omega \to X$ is strongly measurable if (and only if) $T\circ f:\Omega \to Y$ is strongly measurable.
The following proposition extends that result:

\begin{pro}\label{pro:Dfunctions}
Suppose $X$ is weakly measure-compact and has property~($\cD$).
Let $T:X \to Y$ be an injective operator and let $f:\Omega \to X$ be a function. If $T\circ f$ is strongly measurable, then so is~$f$.
\end{pro}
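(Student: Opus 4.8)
I would break the argument into ``get scalar measurability, then reduce to a subspace on which $T$ lands in a separable space, then use weak measure-compactness to force essential separable-valuedness'' and finish with the Pettis measurability theorem. First, since $T\circ f$ is strongly measurable it is scalarly measurable, and since $X$ has property~($\cD$), Corollary~\ref{cor:Dtilde} tells us that $T$ is a weak Baire embedding; hence $f$ is scalarly measurable.

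Next I would localize. Strong measurability of $T\circ f$ provides a $\nu$-null set $N\in\Sigma$ and a separable subspace $Y_0\sub Y$ with $(T\circ f)(\Omega\setminus N)\sub Y_0$. Put $Z:=T^{-1}(Y_0)$, a subspace of~$X$ with $f(\Omega\setminus N)\sub Z$; by Lemma~\ref{lem:subspaces}, $Z$ has property~($\cD$). The point of passing to~$Z$ is that $T|_Z:Z\to Y_0$ is an injective operator into a \emph{separable} Banach space: fixing a countable set $\{y_n^*:n\in\Nat\}\sub Y_0^*$ separating the points of~$Y_0$ and setting $z_n^*:=y_n^*\circ T|_Z\in Z^*$, the family $C:=\{z_n^*:n\in\Nat\}$ is countable and total over~$Z$. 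From this I would extract the key structural fact that \emph{every weakly compact subset $W$ of $Z$ is norm-separable}: on~$W$ the topology $\sigma(Z,C)$ is Hausdorff (because $C$ is total) and coarser than the compact topology~$w$, hence equal to it, and since $\sigma(Z,C)$ is metrizable (being induced by the countably many functionals in~$C$), $(W,w)$ is compact metrizable, so $W$ is weakly separable and therefore norm-separable. This is the substitute for the Markushevich-basis argument used in the WLD case of~\cite{dev-rod}, which is unavailable under the present weaker hypotheses.

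Finally I would feed in weak measure-compactness. Let $\mu$ be the image measure of $f|_{\Omega\setminus N}$, a Baire probability measure on $(X,w)$. Weak measure-compactness of~$X$ ensures that $\mu$ extends to a Radon probability measure~$\tilde\mu$ on $(X,w)$, which is accordingly inner regular with respect to weakly compact sets; one checks (using that $T\circ f$ is essentially $Y_0$-valued and $T$ is continuous) that $\tilde\mu$ is concentrated on the weakly closed set~$Z$. So for each $m\in\Nat$ there is a weakly compact $W_m\sub Z$ with $\tilde\mu(X\setminus W_m)<1/m$, and by the previous paragraph each $W_m$ is norm-separable; hence $Z_0:=\overline{{\rm span}\big(\bigcup_{m\in\Nat}W_m\big)}$ is a separable subspace of~$X$ with $\tilde\mu(Z_0)=1$. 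Routine measure-theoretic arguments then give $f(\omega)\in Z_0$ for $\nu$-a.e.\ $\omega$, i.e.\ $f$ is essentially separably valued; being also scalarly measurable, $f$ is strongly measurable.

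The step I expect to be the main obstacle is the passage from weak measure-compactness to tightness of the image measure with respect to weakly compact subsets of $(X,w)$ (equivalently, the existence of a Radon extension of~$\mu$ to the weak topology). Working only from the bare definition ``$(X,w)$ is measure-compact'' one first obtains $\tau$-additivity of~$\mu$, and then one still needs the supplementary fact that a $\tau$-additive Baire measure on the weak topology of a Banach space is Radon; the cleanest path is to invoke this in the form furnished by Edgar's work~\cite{edgar2}.
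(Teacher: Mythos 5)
Your opening step is exactly the paper's, and your observation that every weakly compact subset of $Z=T^{-1}(Y_0)$ is norm-separable (via the countable total set $C$ and the coincidence of $w$ with the metrizable topology $\sigma(Z,C)$ on weakly compact sets) is correct. However, the step you yourself single out as the main obstacle is a genuine gap, not a technicality that can be outsourced. Weak measure-compactness, as defined in the paper, gives only that the image measure $\mu$ is $\tau$-smooth on $\Ba(X,w)$, hence that it extends to a $\tau$-smooth Borel measure on $(X,w)$. The further assertion that this extension is \emph{Radon}, i.e. inner regular with respect to \emph{weakly compact} sets, amounts to saying that $(X,w)$ is a pre-Radon space; this is a strictly stronger property in general and is not what \cite{edgar2} furnishes. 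It does hold when $X$ is weakly $\mathcal{K}$-analytic ($\mathcal{K}$-analytic spaces are Radon), but the whole point of the proposition --- see Remark~\ref{rem:improvesDeville} --- is to reach weakly measure-compact spaces with property~($\cD$) that are not WLD, such as Pol's $C(K)$, and for those no such tightness is available. Without tightness, $\tau$-smoothness only yields a support that is $\sigma(Z,C)$-separable, and $\sigma(Z,C)$-separability does not imply norm-separability, so your construction of $Z_0$ collapses. (There is also a smaller issue at the end: a separable subspace $Z_0$ need not belong to $\Ba(X,w)$, so ``$f(\omega)\in Z_0$ for $\nu$-a.e.\ $\omega$'' requires an outer-measure argument via $Z_0^\perp$ rather than being routine.)

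The intended route is much shorter and bypasses tightness entirely. The consequence of weak measure-compactness to invoke is Edgar's result \cite[Proposition~5.4]{edgar1} (cf. \cite[Theorem~3.3]{Musial}): every scalarly measurable function $f:\Omega\to X$ is \emph{scalarly equivalent} to some strongly measurable $f_0:\Omega \to X$. Having obtained the scalar measurability of~$f$ from Corollary~\ref{cor:Dtilde} exactly as you do, one applies this to get~$f_0$; then $T\circ f$ and $T\circ f_0$ are two strongly measurable, scalarly equivalent $Y$-valued functions, hence equal $\nu$-a.e.\ (on an essentially separable range countably many functionals suffice to detect non-vanishing), and the injectivity of~$T$ gives $f=f_0$ $\nu$-a.e. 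Note that this is where the strong measurability of $T\circ f$ is used a second time, to upgrade scalar equivalence to almost everywhere equality.
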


Recall that the Banach space $X$ is said to be {\em weakly measure-compact} if every probability measure $P$ on~$\Ba(X,w)$
is {\em $\tau$-smooth}, i.e. for any net $h_\alpha:X \to \mathbb{R}$ of bounded weakly continuous functions
which pointwise decreases to~$0$, we have $\lim_\alpha \int_X h_\alpha \, dP=0$. Every weakly Lindelöf (e.g. WLD) Banach space
is weakly measure-compact, see \cite[Section~4]{edgar1}.

\begin{proof}[Proof of Proposition~\ref{pro:Dfunctions}] 
By Corollary~\ref{cor:Dtilde}, $f$ is scalarly measurable. 
Since $X$ is weakly measure-compact, a result of Edgar (see~\cite[Proposition~5.4]{edgar1}, cf. \cite[Theorem~3.3]{Musial})
ensures that $f$ is {\em scalarly equivalent} to a strongly measurable function $f_0:\Omega \to X$.
Scalar equivalence means that for each $x^*\in X^*$ we have $\langle x^*,f\rangle=\langle x^*,f_0\rangle$ $\nu$-a.e. 
(the exceptional set depending on~$x^*$).
Note that $T\circ f$ and $T\circ f_0$ are scalarly equivalent and strongly measurable, hence $T\circ f = T\circ f_0$ $\nu$-a.e.
The injectivity of~$T$ implies that $f=f_0$ $\nu$-a.e., so that $f$ is strongly measurable. 
\end{proof}

\begin{rem}\label{rem:improvesDeville}
The class of weakly measure-compact Banach spaces having property~$(\mathcal{D})$
is strictly larger than that of WLD spaces. 
Indeed, there are compact Hausdorff topological spaces $K$ with the following properties:
\begin{enumerate}
\item[(i)] $K$ is scattered of height~$3$,
\item[(ii)] $C(K)$ is weakly Lindelöf,
\item[(iii)] $K$ is not Corson,
\end{enumerate}
see \cite{pol2} (cf. \cite{fer-kos-kub}). Condition~(i) implies that
$(B_{C(K)^*},w^*)$ is sequential (see \cite[Theorem~3.2]{gon3}), thus $C(K)$
has property~($\mathcal{E}'$) and so property~($\cD$). (ii)~implies that $C(K)$ is weakly measure-compact.
On the other hand, $C(K)$ is not WLD by~(iii).
\end{rem}

Obviously, if $T:X \to Y$ is an injective operator and the function $f:\Omega \to X$
is $T^*(Y^*)$-integrable, then $T\circ f$ is Pettis integrable. In the opposite direction, we have the following result:

\begin{pro}\label{pro:TX-integrability}
Let $T:X \to Y$ be a semi-embedding and let $f:\Omega \to X$ be a function for which there is a constant $c>0$ such that,
for each $y^*\in Y^*$, we have 
\begin{equation}\label{eqn:scalarboundedness}
	|\langle T^*(y^*),f \rangle| \leq c \|T^*(y^*)\| \quad \nu\mbox{-a.e.}
\end{equation}
If $T\circ f$ is Pettis integrable, then $f$ is $T^*(Y^*)$-integrable.
\end{pro}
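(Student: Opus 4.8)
The plan is to derive the $T^*(Y^*)$-integrability of $f$ from the Pettis integrability of $g:=T\circ f$, the only non-formal ingredient being the closedness of $T(B_X)$. Write $\Gamma:=T^*(Y^*)$, which is a total linear subspace of~$X^*$ because $T$ is injective. First, $f$ is $\Gamma$-scalarly integrable, since for $y^*\in Y^*$ we have $\langle T^*(y^*),f\rangle=\langle y^*,g\rangle\in L_1(\nu)$. Set $C:=c\,T(B_X)$; since $T$ is a semi-embedding this set is closed, and it is clearly convex, balanced and bounded (as $C\sub c\|T\|B_Y$). Denoting by $s_C(y^*):=\sup_{z\in C}\langle y^*,z\rangle$ its support function, the identity $c\|T^*(y^*)\|=c\sup_{x\in B_X}|\langle y^*,Tx\rangle|=s_C(y^*)$ shows that \eqref{eqn:scalarboundedness} says precisely: for each $y^*\in Y^*$, $\langle y^*,g\rangle\le s_C(y^*)$ $\nu$-a.e.

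Next I would fix $A\in\Sigma$ and locate the prospective value $\int_A f\,d\nu$. If $\nu(A)=0$, take it to be~$0$. If $\nu(A)>0$, then for every $y^*\in Y^*$, using that $g$ is Pettis integrable,
\[
	\Big\langle y^*,\int_A g\,d\nu\Big\rangle=\int_A\langle y^*,g\rangle\,d\nu\le \nu(A)\,s_C(y^*)=s_{\nu(A)C}(y^*).
\]
Since $\nu(A)C=T\big(c\,\nu(A)B_X\big)$ is a nonempty closed convex (and bounded) set, the Hahn-Banach separation theorem forces $\int_A g\,d\nu\in T\big(c\,\nu(A)B_X\big)\sub T(X)$. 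Let $x_A\in c\,\nu(A)B_X$ be the unique point with $T(x_A)=\int_A g\,d\nu$ (uniqueness by injectivity of~$T$).

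Finally, I would verify that $\int_A f\,d\nu:=x_A$ works: for every $x^*=T^*(y^*)\in\Gamma$,
\[
	\langle x^*,x_A\rangle=\langle y^*,T(x_A)\rangle=\Big\langle y^*,\int_A g\,d\nu\Big\rangle=\int_A\langle y^*,g\rangle\,d\nu=\int_A\langle x^*,f\rangle\,d\nu,
\]
and the case $\nu(A)=0$ is trivial since $\Gamma$ is total. Hence $f$ is $\Gamma$-integrable.

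The only genuinely substantive point is the claim $\int_A(T\circ f)\,d\nu\in T(X)$. Here the scalar boundedness~\eqref{eqn:scalarboundedness} confines the Pettis integral $\int_A g\,d\nu$ to the bounded convex set $c\,\nu(A)\,T(B_X)$ via the elementary ``barycenter lies in a closed convex set'' argument, and it is exactly the semi-embedding hypothesis --- the closedness of $T(B_X)$ --- that makes this set closed and therefore equal to $T\big(c\,\nu(A)B_X\big)$. Everything else is routine bookkeeping with the adjoint~$T^*$.
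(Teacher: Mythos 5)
Your proof is correct and follows essentially the same route as the paper: both arguments use the Hahn--Banach separation theorem together with the closedness of $T(cB_X)$ (the semi-embedding hypothesis) and the scalar bound \eqref{eqn:scalarboundedness} to confine $\int_A (T\circ f)\,d\nu$ to $\nu(A)\,c\,T(B_X)\sub T(X)$, and then pull the integral back through the injective operator~$T$. The only cosmetic differences are that you phrase the separation step via the support function of $c\,T(B_X)$ instead of normalizing by $1/\nu(A)$, and that you write out the final verification which the paper leaves as ``clearly implies''.
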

\begin{proof} Since $T$ is a semi-embedding, $T(c B_X)$ is closed. We claim that 
\begin{equation}\label{eqn:inclusion}
	\frac{1}{\nu(A)}\int_A T\circ f \, d\nu \in T(c B_X)
\end{equation}
for every $A\in \Sigma$ with $\nu(A)>0$. Indeed, suppose this is not the case. Then the Hahn-Banach separation theorem ensures the
existence of some $y^*\in Y^*$ such that 
$$
	\frac{1}{\nu(A)}\int_A \langle y^*,T\circ f \rangle \, d\nu=
	y^*\Big(\frac{1}{\nu(A)}\int_A T\circ f \, d\nu\Big) >
	\sup_{x\in B_X} y^*(T(cx)) =c \|T^*(y^*)\|.
$$
But, on the other hand, we have
$$
	\frac{1}{\nu(A)}\int_A \langle y^*,T\circ f \rangle \, d\nu
	=
	\frac{1}{\nu(A)}\int_A \langle T^*(y^*), f \rangle \, d\nu
	\stackrel{\eqref{eqn:scalarboundedness}}{\leq}
	c\|T^*(y^*)\|,
$$  
which is a contradiction. This proves~\eqref{eqn:inclusion}. Therefore, $\int_A T\circ f \, d\nu \in T(X)$ for every $A\in \Sigma$, which clearly implies
that $f$ is $T^*(Y^*)$-integrable.
\end{proof}

\begin{rem}\label{rem:tauberian}
Let $T:X\to Y$ be a tauberian operator (i.e. $(T^{**})^{-1}(Y)=X$).
In this case, the argument of \cite[Proposition~8]{edg7} shows that
a scalarly measurable and scalarly bounded function $f:\Omega \to X$
is Pettis integrable whenever $T\circ f$ is Pettis integrable. We stress that an operator
$T:X \to Y$ is injective and tauberian if and only if the restriction $T|_Z$ is a semi-embedding for every subspace $Z \sub X$,
see \cite{nei-ros}.
\end{rem}

We arrive at the main result of this section:

\begin{theo}\label{theo:sequential}
Let $T:X \to Y$ be a semi-embedding. If $X$ has property~($\mathcal{D}'$) and $Y$ has the $\nu$-RNP,
then $X$ has the $\nu$-WRNP.
\end{theo}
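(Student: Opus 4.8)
The plan is to push the $\nu$-RNP of~$Y$ through the Radon--Nikod\'ym density of the vector measure $A\mapsto T(G(A))$, to invert that density through the semi-embedding~$T$ so as to obtain an $X$-valued density, and finally to invoke property~($\mathcal{D}'$) to see that this density is Pettis integrable. So, fix a $\nu$-continuous vector measure $G:\Sigma\to X$ of bounded variation; the goal is to produce a Pettis integrable $f:\Omega\to X$ with $\int_A f\,d\nu=G(A)$ for all $A\in\Sigma$.

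First I would make the routine reduction to the case $\|G(A)\|\le|G|(A)\le\nu(A)$ for every $A\in\Sigma$: passing from~$\nu$ to $\mu:=\nu+|G|$ (a finite complete measure mutually absolutely continuous with~$\nu$, which may be normalized) is harmless, since the $\mu$-RNP of~$Y$ is equivalent to its $\nu$-RNP and a $\mu$-Pettis density of~$G$ yields a $\nu$-Pettis density after multiplication by $\tfrac{d\mu}{d\nu}\in L_1(\nu)$. Once $\|G(A)\|\le\nu(A)$, the vector measure $A\mapsto T(G(A))$ is $\nu$-continuous of bounded variation, so the $\nu$-RNP of~$Y$ provides a Bochner integrable $g:\Omega\to Y$ with $\int_A g\,d\nu=T(G(A))$ for all $A\in\Sigma$.

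The heart of the proof — and, I expect, the main obstacle — is to show that $g(\omega)\in T(B_X)$ for $\nu$-a.e.\ $\omega$; this is the only place where the semi-embedding hypothesis (the closedness of $T(B_X)$) is used. Since~$g$ is strongly measurable, I would fix a countably generated sub-$\sigma$-algebra $\Sigma_0\sub\Sigma$ rendering~$g$ measurable, write $\Sigma_0=\sigma\big(\bigcup_n\Sigma_n\big)$ for an increasing sequence of finite sub-$\sigma$-algebras~$\Sigma_n$, and consider the conditional expectations $g_n:=\mathbb{E}[g\,|\,\Sigma_n]$. On each atom~$A$ of~$\Sigma_n$ with $\nu(A)>0$ one has $g_n\equiv\tfrac{1}{\nu(A)}\int_A g\,d\nu=T\big(\tfrac{1}{\nu(A)}G(A)\big)\in T(B_X)$, because $\|\tfrac{1}{\nu(A)}G(A)\|\le\tfrac{|G|(A)}{\nu(A)}\le1$; hence $g_n$ is $T(B_X)$-valued off a fixed $\nu$-null set (the union of the null atoms of the $\Sigma_n$). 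By the vector-valued martingale convergence theorem $g_n\to g$ $\nu$-a.e., and since $T(B_X)$ is norm-closed we conclude $g(\omega)\in T(B_X)$ a.e.

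Finally I would define $f(\omega):=T^{-1}(g(\omega))$ whenever $g(\omega)\in T(B_X)$ and $f(\omega):=0$ otherwise; this is well defined by injectivity of~$T$, satisfies $\|f(\omega)\|\le1$ a.e., and $T\circ f=g$ a.e. In particular $T\circ f$ is Pettis integrable and $|\langle T^*(y^*),f\rangle|\le\|T^*(y^*)\|$ a.e.\ for every $y^*\in Y^*$, so Proposition~\ref{pro:TX-integrability} (with $c=1$) shows that~$f$ is $T^*(Y^*)$-integrable; moreover $\int_A\langle T^*(y^*),f\rangle\,d\nu=\int_A\langle y^*,g\rangle\,d\nu=\langle y^*,T(G(A))\rangle=\langle T^*(y^*),G(A)\rangle$, so by totality of $T^*(Y^*)$ the $T^*(Y^*)$-integral of~$f$ over~$A$ equals~$G(A)$. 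Since~$X$ has property~($\mathcal{D}'$), Theorem~\ref{theo:Dprime} upgrades~$f$ to a Pettis integrable function, whose indefinite Pettis integral must coincide with~$G$ because the two agree on the total set $T^*(Y^*)$. Hence $G$ is the indefinite Pettis integral of the Pettis integrable function~$f$, which is exactly what it means for~$X$ to have the $\nu$-WRNP.
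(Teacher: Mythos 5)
Your proposal is correct and follows essentially the same route as the paper's proof: push the measure through $T$, differentiate in $Y$ using the $\nu$-RNP, use the norm-closedness of $T(cB_X)$ to see that the Bochner density lands in $T(cB_X)$ almost everywhere, lift via $T^{-1}$, and then combine Proposition~\ref{pro:TX-integrability} with Theorem~\ref{theo:Dprime}. The only cosmetic difference is that you prove the localization $g(\omega)\in T(B_X)$ a.e.\ by an explicit martingale argument, whereas the paper obtains the same fact by citing the lemma that a Bochner density lies a.e.\ in the norm closure of the set of its averages.
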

\begin{proof}
Let $I:\Sigma\to X$ be a countably additive vector measure of $\sigma$-finite variation
such that $I(A)=0$ for every $A\in \Sigma$ with $\nu(A)=0$.  
We will prove the existence of a Pettis
integrable function $f:\Omega\to X$ such that $I(A)=\int_A f \, d\nu$ for all $A\in \Sigma$.
We can assume without loss of generality that there is a constant $c>0$
such that $\|I(A)\|\leq c \nu(A)$ for every $A\in \Sigma$ (see e.g. \cite[proof of Lemma~5.9]{van}).

Define 
$$
	\tilde{I}:\Sigma \to Y, 
	\quad
	\tilde{I}(A):=T(I(A)),
$$
so that $\tilde{I}$ is a countably additive vector measure satisfying
$\|\tilde{I}(A) \|\leq \|T\|c \nu(A)$ for every $A\in \Sigma$.
Since $Y$ has the $\nu$-RNP, there is a Bochner integrable function $g:\Omega \to Y$ such that
\begin{equation}\label{eqn:derivative}
	T(I(A))=\tilde{I}(A)=\int_A g\, d\nu
	\quad\mbox{for all }A\in \Sigma. 
\end{equation}
The Bochner integrability of~$g$ implies that
$$
	g(\omega) \in H:=\overline{\Big\{\frac{1}{\nu(A)}\int_A g \, d\nu: \, A\in \Sigma, \, \nu(A)>0\Big\}}^{\|\cdot\|}
	\quad\mbox{for }\nu\mbox{-a.e. }\omega \in \Omega, 
$$
see e.g. \cite[Lemma~4.3]{kup} (cf. \cite[Lemma~3.7]{cas-kad-rod-5}),
so we can assume without loss of generality that $g(\Omega) \sub H$. 
On the other hand, \eqref{eqn:derivative} yields
$$
	H\sub \overline{T(c B_X)}^{\|\cdot\|}=T(c B_X),
$$
where the equality holds because $T$ is a semi-embedding. Hence $g(\Omega) \sub T(c B_X)$, so that
there is a function $f:\Omega \to cB_X$ such that $T\circ f=g$. 

We claim that $f$ is Pettis integrable. Indeed, $f$ is $T^*(Y^*)$-integrable by Proposition~\ref{pro:TX-integrability}. 
Bearing in mind that $X$ has property~($\cD'$), an appeal to Theorem~\ref{theo:Dprime} ensures that $f$ is Pettis integrable. 
Since $T$ is injective and 
$$
	T\Big(\int_A f\, d\nu\Big)=\int_A T\circ f\, d\nu=\int_A g\, d\nu\stackrel{\eqref{eqn:derivative}}{=}T(I(A))
	\quad
	\mbox{for all }A\in \Sigma,
$$
we have $\int_A f\, d\nu=I(A)$ for every $A\in \Sigma$. This proves that
$X$ has the $\nu$-WRNP.
\end{proof}

\begin{rem}\label{rem:g}
In the previous proof, an alternative way to check the Pettis integrability of~$f$ is as follows. Since
$f$ is $T^*(Y^*)$-scalarly measurable and $X$ has property~($\cD$), $f$ is scalarly measurable. 
On the other hand, $f$ is bounded, hence the $\nu$-PIP of~$X$ ensures that $f$ is Pettis integrable.
\end{rem}

At this point it is convenient to recall that the RNP (resp. WRNP) is equivalent to the $\lambda$-RNP (resp. $\lambda$-WRNP), where
$\lambda$ is the Lebesgue measure on $[0,1]$, see e.g. \cite[p.~138, Corollary~8]{die-uhl-J} (resp. \cite[Theorem~11.3]{Musial}).

\begin{rem}\label{rem:WRNP-1} 
The conclusion of Theorem~\ref{theo:sequential} might fail if $Y$ is only assumed to have the WRNP.
Indeed, let $Y$ be any Banach space having the WRNP but failing the RNP
(e.g. $Y=JT^*$, where $JT$ is the James tree space). Since the RNP is a separably determined property
(see e.g. \cite[p.~81, Theorem~2]{die-uhl-J}), there is a separable subspace $X\sub Y$
without the RNP. Then $X$ has property~($\cD'$) and the inclusion operator from~$X$ into~$Y$
is a semi-embedding, but $X$ fails the WRNP (which is equivalent to the RNP for separable Banach spaces). 
\end{rem}

\begin{rem}\label{rem:WRNP-2}
Theorem~\ref{theo:sequential} cannot be improved by replacing the WRNP of~$X$ by the RNP,
even if $X$ has $w^*$-angelic dual. Indeed, let $Z$ be a separable Banach space not containing subspaces isomorphic to~$\ell_1$ such that 
$X:=Z^*$ is not separable (e.g. $Z=JT$). Then $(X^*,w^*)$ is angelic (as we already pointed out in Section~\ref{section:measurability}), 
$X$~fails the RNP and there is a semi-embedding $T:X \to \ell_2$
(because $X$ is the dual of a separable Banach space, see e.g. \cite[Lemma 4.1.12]{bou-J}).
\end{rem}

The $\nu$-WRNP and the $\nu$-RNP are equivalent for weakly measure-compact Banach spaces
(thanks to \cite[Proposition~5.4]{edgar1}). Since every WLD Banach space is weakly measure-compact
and has property~$(\cD'$), from Theorem~\ref{theo:sequential} we get:

\begin{cor}\label{cor:WLD}
Let $T:X \to Y$ be a semi-embedding. If $X$ is WLD and $Y$ has the $\nu$-RNP,
then $X$ has the $\nu$-RNP.
\end{cor}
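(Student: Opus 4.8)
The plan is to derive Corollary~\ref{cor:WLD} from Theorem~\ref{theo:sequential} together with the two standing facts about WLD spaces recalled in the text. First I would note that every WLD Banach space has $w^*$-angelic dual, hence property~($\mathcal{E}'$) and thus property~($\mathcal{D}'$); actually the weaker property~($\mathcal{D}'$) is all that Theorem~\ref{theo:sequential} requires. Second, every WLD Banach space is weakly Lindel\"{o}f, hence weakly measure-compact by \cite[Section~4]{edgar1}. These are exactly the two ingredients needed.

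The argument then runs as follows. Assume $T:X\to Y$ is a semi-embedding, $X$ is WLD, and $Y$ has the $\nu$-RNP. Since $X$ is WLD it has property~($\mathcal{D}'$), so Theorem~\ref{theo:sequential} applies and gives that $X$ has the $\nu$-WRNP. It remains to upgrade the $\nu$-WRNP to the $\nu$-RNP. This is where weak measure-compactness enters: for weakly measure-compact Banach spaces the $\nu$-WRNP and the $\nu$-RNP coincide, a consequence of \cite[Proposition~5.4]{edgar1} (the same result invoked in the remark just above the corollary — weak measure-compactness forces every scalarly measurable function to be scalarly equivalent to a strongly measurable one, which collapses the Pettis-type integral representation furnished by the WRNP into a Bochner-type one). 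Since $X$ is WLD, it is weakly Lindel\"{o}f, hence weakly measure-compact, so the $\nu$-WRNP of~$X$ yields the $\nu$-RNP of~$X$, completing the proof.

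There is essentially no obstacle here: the corollary is a direct combination of Theorem~\ref{theo:sequential} with two well-known structural features of WLD spaces, both already cited in the surrounding text. The only point requiring a moment's care is making explicit that WLD $\Rightarrow$ weakly measure-compact (via weak Lindel\"{o}fness) and WLD $\Rightarrow$ property~($\mathcal{D}'$) (via $w^*$-angelic dual, or property~($\mathcal{E}'$)), so that both hypotheses of the theorem and the hypothesis of the WRNP-to-RNP transfer are simultaneously met.

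\begin{proof}
Since $X$ is WLD, it has $w^*$-angelic dual, hence property~($\mathcal{E}'$) and therefore property~($\mathcal{D}'$). By Theorem~\ref{theo:sequential}, $X$ has the $\nu$-WRNP. On the other hand, $X$ is weakly Lindel\"{o}f, hence weakly measure-compact (see \cite[Section~4]{edgar1}), and for weakly measure-compact Banach spaces the $\nu$-WRNP and the $\nu$-RNP are equivalent (thanks to \cite[Proposition~5.4]{edgar1}). Therefore $X$ has the $\nu$-RNP.
\end{proof}
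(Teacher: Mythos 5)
Your proof is correct and follows exactly the paper's route: the corollary is obtained from Theorem~\ref{theo:sequential} by noting that WLD implies property~($\mathcal{D}'$) (via the $w^*$-angelic dual) and that WLD implies weak measure-compactness, under which the $\nu$-WRNP and the $\nu$-RNP coincide by \cite[Proposition~5.4]{edgar1}. Nothing is missing.
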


\subsection*{Acknowledgements}
The author wishes to thank A.~Avil\'{e}s, A.J.~Guirao and G.~Mart\'{i}nez-Cervantes
for valuable discussions related to the topic of this paper.

\bibliographystyle{amsplain}

\end{document}